\newtheorem{theorem}{Theorem}[section]
\newtheorem{prop}[theorem]{Proposition}
\newtheorem{coro}[theorem]{Corollary}
\newtheorem{prop-def}{Proposition-Definition}[section]
\theoremstyle{definition}
\newtheorem{defn}[theorem]{Definition}
\newtheorem{remark}[theorem]{Remark}
\newcommand{\nc}{\newcommand}
\nc{\delete}[1]{{}}
\nc{\mmargin}[1]{}
\nc{\mlabel}[1]{\label{#1}}  
\nc{\mcite}[1]{\cite{#1}}  
\nc{\mref}[1]{\ref{#1}}  
\nc{\mbibitem}[1]{\bibitem{#1}} 
\nc{\mlabel}[1]{\label{#1}  
{\hfill \hspace{1cm}{\bf{{\ }\hfill(#1)}}}}
\nc{\mcite}[1]{\cite{#1}{{\bf{{\ }(#1)}}}}  
\nc{\mref}[1]{\ref{#1}{{\bf{{\ }(#1)}}}}  
\nc{\mbibitem}[1]{\bibitem[\bf #1]{#1}} 
\newcommand{\sha}{{\mbox{\cyr X}}}
\newcommand{\shap}{{\mbox{\cyrs X}}}
\font\cyr=wncyr10 \font\cyrs=wncyr7
\newcommand{\bcr}{{\mathfrak{B}\mathfrak{C}\mathfrak{R}}}
\newcommand{\bcd}{{\mathfrak{B}\mathfrak{C}\mathfrak{D}}}
\newcommand{\svs}{{\mathfrak{S}\mathfrak{V}\mathfrak{S}}}
\newcommand{\sca}{{\mathfrak{S}\mathfrak{C}\mathfrak{A}}}
\newcommand{\scr}{{\mathfrak{S}\mathfrak{C}\mathfrak{R}}}
\newcommand{\bk}{{\mathbf{k}}}
\nc{\vep}{\varepsilon}
\nc{\bin}[2]{ (_{\stackrel{\scs{#1}}{\scs{#2}}})}  
\nc{\binc}[2]{(\!\! \begin{array}{c} \scs{#1}\\
    \scs{#2} \end{array}\!\!)}  
\nc{\bincc}[2]{  ( {\scs{#1} \atop
    \vspace{-1cm}\scs{#2}} )}  
\nc{\bs}{\bar{S}}
\nc{\la}{\longrightarrow}
\nc{\ot}{\otimes}
\nc{\rar}{\rightarrow}
\nc{\dar}{\downarrow}
\nc{\dap}[1]{\downarrow \rlap{$\scriptstyle{#1}$}}
\nc{\defeq}{\stackrel{\rm def}{=}}
\nc{\dis}[1]{\displaystyle{#1}}
\nc{\dotcup}{\ \displaystyle{\bigcup^\bullet}\ }
\nc{\hcm}{\ \hat{,}\ }
\nc{\hts}{\hat{\otimes}}
\nc{\hcirc}{\hat{\circ}}
\nc{\lleft}{[}
\nc{\lright}{]}
\nc{\curlyl}{\left \{ \begin{array}{c} {} \\ {} \end{array}
    \right .  \!\!\!\!\!\!\!}
\nc{\curlyr}{ \!\!\!\!\!\!\!
    \left . \begin{array}{c} {} \\ {} \end{array}
    \right \} }
\nc{\longmid}{\left | \begin{array}{c} {} \\ {} \end{array}
    \right . \!\!\!\!\!\!\!}
\nc{\ora}[1]{\stackrel{#1}{\rar}}
\nc{\ola}[1]{\stackrel{#1}{\la}}
\nc{\scs}[1]{\scriptstyle{#1}} \nc{\mrm}[1]{{\rm #1}}
\nc{\dirlim}{\displaystyle{\lim_{\longrightarrow}}\,}
\nc{\invlim}{\displaystyle{\lim_{\longleftarrow}}\,}
\nc{\dislim}[1]{\displaystyle{\lim_{#1}}} \nc{\colim}{\mrm{colim}}
\nc{\mvp}{\vspace{0.3cm}} \nc{\tk}{^{(k)}} \nc{\tp}{^\prime}
\nc{\ttp}{^{\prime\prime}} \nc{\svp}{\vspace{2cm}}
\nc{\vp}{\vspace{8cm}}
\nc{\modg}[1]{\!<\!\!{#1}\!\!>}
\nc{\intg}[1]{F_C(#1)}
\nc{\lmodg}{\!<\!\!}
\nc{\rmodg}{\!\!>\!}
\nc{\cpi}{\widehat{\Pi}}
\nc{\ssha}{{\mbox{\cyrs X}}} 
\nc{\tsha}{{\mbox{\cyrt X}}}
\nc{\shpr}{\diamond}    
\nc{\labs}{\mid\!}
\nc{\rabs}{\!\mid}
\nc{\ann}{\mrm{ann}}
\nc{\Aut}{\mrm{Aut}}
\nc{\can}{\mrm{can}}
\nc{\Cont}{\mrm{Cont}}
\nc{\rchar}{\mrm{char}}
\nc{\cok}{\mrm{coker}}
\nc{\dtf}{{R-{\rm tf}}}
\nc{\dtor}{{R-{\rm tor}}}
\nc{\Div}{{\mrm Div}}
\nc{\End}{\mrm{End}}
\nc{\Ext}{\mrm{Ext}}
\nc{\Fil}{\mrm{Fil}}
\nc{\Fr}{\mrm{Fr}}
\nc{\Frob}{\mrm{Frob}}
\nc{\Gal}{\mrm{Gal}}
\nc{\GL}{\mrm{GL}}
\nc{\Hom}{\mrm{Hom}}
\nc{\hsr}{\mrm{H}}
\nc{\hpol}{\mrm{HP}}
\nc{\id}{\mrm{id}}
\nc{\im}{\mrm{im}}
\nc{\incl}{\mrm{incl}}
\nc{\length}{\mrm{length}}
\nc{\mforall}{\quad \text{for all }}
\nc{\mchar}{\rm char}
\nc{\mpart}{\mrm{part}}
\nc{\ql}{{\QQ_\ell}}
\nc{\qp}{{\QQ_p}}
\nc{\rank}{\mrm{rank}}
\nc{\rcot}{\mrm{cot}}
\nc{\rdef}{\mrm{def}}
\nc{\rdiv}{{\rm div}}
\nc{\rtf}{{\rm tf}}
\nc{\rtor}{{\rm tor}}
\nc{\res}{\mrm{res}}
\nc{\SL}{\mrm{SL}}
\nc{\Spec}{\mrm{Spec}}
\nc{\tor}{\mrm{tor}}
\nc{\Tr}{\mrm{Tr}}
\nc{\tr}{\mrm{tr}}
\nc{\bfk}{{\bf k}}
\nc{\bfone}{{\bf 1}}
\nc{\bfzero}{{\bf 0}}
\nc{\detail}{\marginpar{\bf More detail}
    \noindent{\bf Need more detail!}
    \svp}
\nc{\Diff}{\mathbf{Diff}}
\nc{\gap}{\marginpar{\bf Incomplete}\noindent{\bf Incomplete!!}
    \svp}
\nc{\FMod}{\mathbf{FMod}}
\nc{\Int}{\mathbf{Int}}
\nc{\Mon}{\mathbf{Mon}}
\nc{\remarks}{\noindent{\bf Remarks: }}
\nc{\Rep}{\mathbf{Rep}}
\nc{\Rings}{\mathbf{Rings}}
\nc{\Sets}{\mathbf{Sets}}
\nc{\BA}{{\mathbb A}}   \nc{\CC}{{\mathbb C}}
\nc{\DD}{{\mathbb D}}   \nc{\EE}{{\mathbb E}}
\nc{\FF}{{\mathbb F}}   \nc{\GG}{{\mathbb G}}
\nc{\HH}{{\mathbb H}}   \nc{\LL}{{\mathbb L}}
\nc{\NN}{{\mathbb N}}   \nc{\PP}{{\mathbb P}}
\nc{\QQ}{{\mathbb Q}}   \nc{\RR}{{\mathbb R}}
\nc{\TT}{{\mathbb T}}   \nc{\VV}{{\mathbb V}}
\nc{\ZZ}{{\mathbb Z}}   \nc{\TP}{\widetilde{P}}
\nc{\cala}{{\mathcal A}}    \nc{\calc}{{\mathcal C}}
\nc{\cald}{\mathcal{D}}     \nc{\cale}{{\mathcal E}}
\nc{\calf}{{\mathcal F}}    \nc{\calg}{{\mathcal G}}
\nc{\calh}{{\mathcal H}}    \nc{\cali}{{\mathcal I}}
\nc{\call}{{\mathcal L}}    \nc{\calm}{{\mathcal M}}
\nc{\caln}{{\mathcal N}}    \nc{\calo}{{\mathcal O}}
\nc{\calp}{{\mathcal P}}    \nc{\calr}{{\mathcal R}}
\nc{\cals}{{\mathcal S}}    \nc{\calt}{{\Omega}}
\nc{\calw}{{\mathcal W}}    \nc{\calx}{{\mathcal X}}
\nc{\CA}{\mathcal{A}}
\nc{\fraka}{{\mathfrak a}}
\nc{\frakb}{\mathfrak{b}}
\nc{\frakB}{{\frak B}} \nc{\frakm}{{\frak
m}} \nc{\frakM}{{\frak M}}
\nc{\frakp}{{\frak p}}
\nc{\frakS}{{\frak S}}
\nc{\frakA}{{\frak A}} \nc{\frakx}{{\frakx}}
\nc{\lir}[1]{\textcolor{red}{\underline{Li:}#1 }}
\begin{document}

\title[Braided Rota-Baxter, quantum quasi-shuffle and dendriform algebras]{Braided Rota-Baxter algebras, quantum quasi-shuffle algebras and braided dendriform algebras}

\author[Li Guo]{Li Guo}
\address{Department of Mathematics and Computer Science, Rutgers University, Newark, NJ 07102, USA}
\email{liguo@rutgers.edu}

\author[Yunnan Li]{Yunnan Li}
\address{School of Mathematics and Information Science, Guangzhou University, Waihuan Road West 230, Guangzhou 510006, China}
\email{ynli@gzhu.edu.cn}

\date{\today}

\begin{abstract}
Rota-Baxter algebras and the closely related dendriform algebras have important physics applications, especially to renormalization of quantum field theory. Braided structures provide effective ways of quantization such as for quantum groups. Continuing recent study relating the two structures, this paper considers Rota-Baxter algebras and dendriform algebras in the braided contexts. Applying the quantum shuffle and quantum quasi-shuffle products, we construct free objects in the categories of braided Rota-Baxter algebras and braided dendriform algebras, under the commutativity condition. We further generalize the notion of dendriform Hopf algebras to the braided context and show that quantum shuffle algebra gives a braided dendriform Hopf algebra. Enveloping braided commutative Rota-Baxter algebras of braided commutative dendriform algebras are obtained.
\end{abstract}

\subjclass[2010]{16T05,16W99,16T25,17B37}

\keywords{quantum shuffle algebra, quantum quasi-shuffle algebra, Rota-Baxter algebra, dendriform algebra, Yang-Baxter equation,
	braided Rota-Baxter algebra, braided dendriform algebra, braided dendriform Hopf algebra}

\maketitle

\tableofcontents

\allowdisplaybreaks

\section{Introduction}
This paper studies Rota-Baxter algebras and the closely related dendriform algebras in the context of braided algebras, with special attention to the free objects obtained from the quantum shuffle and quantum quasi-shuffle algebras.

\subsection{Yang-Baxter equation and braided algebras}

The Yang-Baxter equation (YBE), also called the quantum Yang-Baxter equation in some literatures to distinguish it with its classical limit, is a master equation in integral models in statistical mechanics named after C.~N. Yang and R.~J. Baxter, from their study of many-body problem~\cite{Ya} and exactly solvable lattice models~\cite{Ba1} respectively.

In the past several decades, the YBE has been studied in depth with progresses in many areas, such as $C^*$-algebras, link invariants, quantum groups, tensor categories, integrable systems and conformal field theory. For example, solutions of the YBE from quantum groups and their representations are called $R$-matrices; while in braided tensor categories, they appear as the structural ingredient, the braidings. In general, any linear operator on a space obeying the braid relation is called a Yang-Baxter operator, as the braid relation is equivalent to the YBE (without the spectral parameter). Then the space is called a \textit{braided vector space}.

Built on a braided space, the notion of a braided algebra was formulated in the early 1990's to generalize classical differential calculus in a noncommutative geometry setting~\cite{Bae1}, and was applied to study quantum analogues of linear-algebraic objects~\cite{HH}, where it was formerly named \textit{r-algebra} or \textit{Yang-Baxter algebra} respectively. Roughly speaking, a braided algebra is an algebra equipped with a Yang-Baxter operator compatible with its multiplication.

Braided algebras are generally studied as objects in braided tensor categories, such as Yetter-Drinfeld categories (see~\cite{Maj}). Concrete examples of braided algebras are commonly found in quantum group theory, in particular the positive parts of quantum groups. Also, given a representation of a quantum group, one can construct several kinds of braided algebras with their Yang-Baxter operators coming from the universal $R$-matrix of the quantum group, including quantum symmetric or exterior algebras~\cite{BZ,CTS}.

With a richer structure than braided algebras, braided Hopf algebras were widely studied. As a special class of braided Hopf algebras, Nichols algebras played a crucial role in the classification program of Hopf algebras~\cite{And, Tak}. In 1998, Rosso quantized the shuffle algebra to give an intrinsic realization of Nichols algebras~\cite{Ros}.
Later Jian and Rosso introduced quantum multi-brace algebras generalizing both braided algebras and $\mathbf{B}_\infty$-algebras~\cite{JR}. Particular interesting examples are quantum quasi-shuffle algebras~\cite{Jian,JRZ}.

Braided algebras have been applied to several other areas of mathematics and physics, such as homology~\cite{Bae,Le}, Hopf algebras~\cite{AS}, noncommutative geometry~\cite{GS} and conformal field theory~\cite{Run}.

\subsection{Rota-Baxter algebras}
Another algebraic structure of importance in physics is Rota-Baxter algebra.

A Rota-Baxter algebra is an (associative or Lie) algebra together with a linear operator satisfying a certain operator identity, called the Rota-Baxter identity. In the associative algebra context, the Rota-Baxter algebra originated from a probability study of G. Baxter~\mcite{Ba} in 1960 where he deduced important identities in fluctuation theory from the Rota-Baxter identity. Other than their theoretical significance, Rota-Baxter algebras have found broad applications in areas of mathematics and physics~\mcite{Guo,Ro}.

On the mathematical side, Rota-Baxter algebras are naturally related to dendriform algebras, tridendriform algebras and Zinbiel algebras~\mcite{Agu,EG,Lod1,LR}, and intrinsically related to quasi-symmetric functions, through their connection with quasi-shuffle product~\mcite{EG1}.

On the physics side, the Rota-Baxter algebra, together with the Hopf algebra, forms the algebraic foundation in the approach of Connes-Kreimer to renormalization of perturbative quantum field theory~\mcite{CK,EGK}. Further, the Rota-Baxter operator in the Lie algebra context is closely related to the operator form of the classical Yang-Baxter equation as the classical limit of the (quantum) Yang-Baxter equation~\mcite{Bai,STS}.

As in the case of well-known algebraic structures, free Rota-Baxter algebras (in the commutative case) were investigated in the early stage of the study. Rota~\mcite{Ro} and Cartier~\mcite{Ca} provided two constructions in late 1960s and early 1970s, with Rota's construction closely related to the Waring formula for symmetric functions and Cartier's construction built on the stuffle product preceding its formal introduction into multiple zeta values by almost three decades~\mcite{3BL}. The third construction, obtained by Guo and Keigher~\mcite{GK}, was built by mixable shuffle product, a generalization of the shuffle product and the explicit form of the quasi-shuffle product which became prominent in connection with multiple zeta values~\mcite{Ho}, but can be traced back to~\mcite{NR}.

\subsection{Braided Rota-Baxter algebras}

Motivated by the importance of quantum groups and braided algebras~\mcite{Kas,KT}, braided or quantized objects from the shuffle product, quasi-shuffle product and Rota-Baxter algebras were obtained, in~\mcite{Ros}, \mcite{Jian,JRZ} and \mcite{Jian1} respectively. See also~\mcite{Foi1} for quantization of algebra of rooted trees. Such constructions provide more examples and applications of Rota-Baxter algebras, quantum groups and braided algebras~\cite{JR}.

In~\cite{Jian1}, the notion of braided Rota-Baxter algebras was introduced. As in the case of Rota-Baxter algebras, quantum quasi-shuffle algebras were used to obtain braided Rota-Baxter algebras. Quantum quasi-shuffle algebras were also shown to be tridendriform algebras.

This paper further clarify the connections between braided algebras and quantum quasi-shuffle algebras on the one hand, and Rota-Baxter algebras and dendriform algebras on the other.

First utilizing quantum quasi-shuffle algebras and braided Rota-Baxter algebras, we obtain free commutative braided Rota-Baxter algebras in both the unitary case (Theorem~\mref{fbcr}) and the nonunitary case (Proposition~\mref{fbcr1}).

We next introduce the notion of a braided dendriform algebra and show that a strong version of braided Rota-Baxter algebra gives a braided dendriform algebra, not just a dendriform algebra. Generalizing the dendriform algebra structure on the shuffle algebra which splits the shuffle product, we derive a braided dendriform algebra structure on the quantum shuffle algebra of Rosso~\mcite{Ros}, enabling us to split the quantum shuffle product (Theorem~\mref{bcd}). Further, this structure gives the free braided commutative dendriform algebra (that is, free braided Zinbiel algebra), generalizing the classical result of Loday~\mcite{Lod}. We further equip a Hopf type structure on the quantum shuffle algebra, more precisely making it a braided dendriform Hopf algebra (Theorem~\mref{bdha}).

The left adjoint functor of the functor from Rota-Baxter algebras to dendriform algebras~\mcite{Agu} gives the enveloping (Rota-Baxter) algebra of a dendriform algebra. Taking into consideration of the braided structure, we give the enveloping algebra of a braided commutative dendriform algebra (Theorem~\mref{berba}).

\smallskip
\noindent
{\bf Notation.} In this paper, we fix a ground field $\bk$ of characteristic 0. All the objects
under discussion, including vector spaces, algebras and tensor products, are defined over $\bk$. For a vector space $V$,
we denote by $T(V)$ the tensor algebra on $V$, by $\otimes$ the tensor product within $T(V)$, and by $\underline{\otimes}$ the one between $T(V)$ and itself as in the deconcatenation coproduct. By an algebra, we mean an associative algebra which is not necessarily unital unless otherwise stated.

We denote by $\mathfrak{S}_{n}$ the symmetric group acting on the set $\{1,2,\ldots,n\}$ and by $s_{i}$, $1\leq i\leq n-1$, the standard generators of $\mathfrak{S}_{n}$ permuting $i$ and $i+1$.

\section{Braided algebras, Rota-Baxter algebras and quantum quasi-shuffle algebras}
\label{sec:freebrba}
In this section, we first recall some background on braided algebras, Rota-Baxter algebras and quantum quasi-shuffle algebras. We then show that the Rota-Baxter algebras obtained from quantum quasi-shuffle algebras are the free objects in braided commutative Rota-Baxter algebras.

\subsection{Braided algebras and Rota-Baxter algebras}
\label{ss:back}

A \textit{braiding} or \textit{Yang-Baxter operator} on a vector space $V$ is a linear
map $\sigma$ in $\mathrm{End}(V\otimes V)$ satisfying the \textit{braiding relation} on $V^{\otimes 3}$:
\begin{equation}
(\sigma\otimes
\id_{V})(\id_{V}\otimes \sigma)(\sigma\otimes
\id_{V})=(\id_{V}\otimes \sigma)(\sigma\otimes
\id_{V})(\id_{V}\otimes \sigma),
\mlabel{eq:qybe}
\end{equation}
which up to a flip is the \textit{Yang-Baxter equation} without spectral parameter. Moreover, we say that $\sigma$ is \textit{symmetric} if $\sigma^2=\id_V^{\otimes 2}$.
A \textit{braided
vector space}, denoted $(V,\sigma)$, is a vector space $V$ equipped with a braiding $\sigma$. For any $n\in \mathbb{N}$ and $1\leq i\leq
n-1$, we denote by $\sigma_i$ the operator $\id_V^{\otimes
(i-1)}\otimes \sigma\otimes \id_V^{\otimes(n-i-1)}\in
\mathrm{End}(V^{\otimes n})$.

\begin{defn}
Let $\lambda$ be a fixed element in $\bk$. A \emph{Rota-Baxter algebra of weight $\lambda$} is a pair $(R,P)$ where $R$ is
an algebra and $P$ is a linear endomorphism of $R$ satisfying
$$P(x)P(y)=P(xP(y))+P(P(x)y)+\lambda P(xy) \mforall x,y\in R.$$
The map $P$ is called a \emph{Rota-Baxter operator of weight $\lambda$}.

For two Rota-Baxter algebras $(R,P)$ and $(R',P')$ of weight $\lambda$, a map $f:R\rightarrow R'$ is called a \textit{homomorphism of Rota-Baxter algebras} if
$f$ is a homomorphism of algebras such that $fP=P'f$.
\end{defn}

\begin{defn}Let $A$ be an algebra with product $\mu$, and $\sigma$ be a braiding on $A$. We call the triple $(A,\mu,\sigma)$ a \emph{braided algebra} if it satisfies the conditions
\begin{equation}\label{ba1}
(\id_A\otimes\mu)\sigma_1\sigma_2=\sigma(\mu\otimes
\id_A),\quad
(\mu\otimes\id_A)\sigma_2\sigma_1=\sigma(\id_A\otimes\mu).
\end{equation}
Moreover, if $A$ is unital with unit $1_A$ and satisfies
\begin{equation}\label{ba2}
\sigma(a\otimes 1_A)=1_A\otimes a,\quad
\sigma(1_A\otimes a)=a\otimes1_A \quad \text{for all } a\in A, \end{equation}
then $A$ is called a \emph{unital braided algebra}. If $\mu\sigma=\mu$, then $A$ is called \textit{(braided) commutative}.

For two braided algebras $(A,\sigma)$ and $(A',\sigma')$, a map $f:A\rightarrow A'$ is called a \textit{homomorphism of braided algebras} if
$f$ is a homomorphism of algebras and $(f\otimes f)\sigma=\sigma'(f\otimes f)$.
If $A,A'$ are unital, then it also requires $f(1_A)=1_{A'}$.
\end{defn}

Following~\cite{Jian1}, we give
\begin{defn}
A triple $(R,P,\sigma)$ is called a \emph{braided (commutative) Rota-Baxter algebra of weight $\lambda$}, if $(R,\sigma)$ is a braided (commutative) algebra and $P$ is an endomorphism of $R$ such that $(R,P)$ is a Rota-Baxter algebra of weight $\lambda$ and $\sigma(P\otimes P)=(P\otimes P)\sigma$.

For two braided Rota-Baxter algebras $(R,P,\sigma)$ and $(R',P',\sigma')$ of weight $\lambda$, a map $f:R\rightarrow R'$ is called a \textit{homomorphism of braided Rota-Baxter algebras}, if $f$ is a homomorphism of Rota-Baxter algebras and of braided algebras.
\end{defn}

One can refine the structure of braided Rota-Baxter algebras as follows~\cite[\S 7]{Jian}.
\begin{defn}
A triple $(R,P,\sigma)$ is called a \emph{right (resp. left) weak braided (commutative) Rota-Baxter algebra of weight $\lambda$}, if $(R,\sigma)$ is a braided (commutative) algebra and $P$ is a Rota-Baxter operator on $R$ of weight $\lambda$ such that \begin{equation}\label{wbr}
\sigma(P\otimes \id_R)=(\id_R\otimes P)\sigma\quad\mbox{(resp. }\sigma(\id_R\otimes P)=(P\otimes \id_R)\sigma).
\end{equation}
\end{defn}
By~\cite[Proposition~3.4]{Jian1}, a right weak braided Rota-Baxter algebra which is also a left weak braided Rota-Baxter algebra is a braided Rota-Baxter algebra, which we call a
\textit{strongly braided Rota-Baxter algebra} for distinction.

For a fixed $\lambda\in\bk$, let $\bcr_\lambda$ denote the category of braided commutative unital Rota-Baxter algebras of weight $\lambda$ with their homomorphisms of braided Rota-Baxter algebras as morphisms. Also let $\scr_\lambda$ denote the full subcategoy of $\bcr_\lambda$ consisting of all strongly braided commutative unital Rota-Baxter algebras of weight $\lambda$. Then $\scr_\lambda$ is included in the category $\scr^0_\lambda$ of strongly braided commutative Rota-Baxter algebras (not necessarily unital).

\subsection{Quantum quasi-shuffle algebras and free braided commutative Rota-Baxter algebras}
\label{ss:fbrb}

Given a unital commutative algebra $A$, the \textit{free commutative Rota-Baxter algebra} on $A$, denoted by $\sha_{\bk,\lambda}(A)$ (or abbreviated as $\sha (A)$),  is constructed as follows~\cite{Guo,GK1}. As a $\bk$-module, we have
 $$\sha (A):=\bigoplus\limits_{i\geq 1}A^{\otimes i}=A\oplus (A\otimes A)\oplus (A\otimes A\otimes A)\oplus\cdots.$$
To define the multiplication $\diamond_\lambda$ on $\sha(A)$, take $\mathfrak{a}=a_0\otimes \cdots \otimes a_m\in A^{\otimes(m+1)}$ and $\mathfrak{b}=b_0\otimes \cdots \otimes b_n\in A^{\otimes(n+1)}$ with $m,n\geq0$. If $mn=0$, define

 \begin{equation}\label{shp1}
 \mathfrak{a}\diamond_\lambda \mathfrak{b}:=
 \begin{cases}
 (a_0b_0)\otimes b_1 \otimes\cdots \otimes b_n, & m=0, n>0, \\
 (a_0b_0)\otimes a_1 \otimes\cdots \otimes a_m, & m>0, n=0, \\
 a_0b_0,                                        & m=n=0. \\
 \end{cases}
 \end{equation}
 If $m>0$ and $n>0$, then $\mathfrak{a}\diamond_\lambda \mathfrak{b}$ is defined recursively on $m$ and $n$ by
 \begin{equation}\label{shp2}
\begin{split}
\mathfrak{a}\diamond_\lambda \mathfrak{b}:=&\,(a_0b_0)\otimes\big((a_1\otimes\cdots\otimes a_m)\diamond_\lambda (1_A\otimes b_1\otimes \cdots b_n)\\
&+(1_A\otimes a_1\otimes \cdots\otimes a_m)\diamond_\lambda (b_1\otimes \cdots b_n)+\lambda (a_1\otimes\cdots\otimes a_m)\diamond_\lambda (b_1\otimes \cdots b_n)\big).
\end{split}
 \end{equation}
 The Rota-Baxter operator $P_{\shap(A)}$ on $\sha(A)$ of weight $\lambda$ is defined by
 \begin{equation}
 P_{\shap(A)}(x_0\otimes \cdots \otimes x_n)=1_A \otimes x_0\otimes\cdots\otimes x_n.
 \label{eq:rbo}
 \end{equation}

We have the tensor product of algebras $\sha (A)=A\otimes\sha^+(A)$, where $\sha^+(A):=\bigoplus_{i\geq 0} A^{\ot i}$ is equipped with the mixable shuffle product~\cite{GK1} which is identified with the quasi-shuffle product~\cite{EG1,Guo}. The algebra $\sha^+(A)$ (resp. $\sha(A)$) is called the (resp. \textit{augmented}) \textit{mixable shuffle} Rota-Baxter algebra of weight $\lambda$, and $\sha^+(A)$ can be naturally embedded as the Rota-Baxter subalgebra  $1_A\otimes\sha^+(A)$ of $\sha(A)$.

Next we consider the braided version of $\sha(A)$ as in \cite{Jian1}. For this purpose, we introduce more notations.

For any $w\in \mathfrak{S}_{n}$, we
denote by $T^\sigma_w$ the corresponding lift of $w$ in the braid
group $B_n$, defined as follows: if $w=s_{i_1}\cdots s_{i_l}$ is
any reduced expression of $w$, then $T^\sigma_w:=\sigma_{i_1}\cdots
\sigma_{i_l}$. This definition is well defined (see, e.g., Theorem
4.12 in \cite{KT}). When $\sigma$ is the usual flip $\tau$, it reduces to the permutation action of $\mathfrak{S}_{n}$ on $V^{\otimes n}$, namely,
\[T^\tau_w(v_1\otimes\cdots\otimes v_n)=v_{w^{-1}(1)}\otimes\cdots\otimes v_{w^{-1}(n)}.\]

We also define $\beta:T(V)\underline{\otimes} T(V)\rightarrow
T(V)\underline{\otimes} T(V)$ by requiring that, for $i,j\geq 1$,  the restriction $\beta_{ij}$ of
$\beta$ to $V^{\otimes i}\underline{\otimes} V^{\otimes j}$ is $T^\sigma_{\chi_{ij}}$ ,where
\[\chi_{ij}=\left(\begin{array}{cccccccc}
1&2&\cdots&i&i+1&i+2&\cdots & i+j\\
j+1&j+2&\cdots&j+i&1& 2 &\cdots & j
\end{array}\right)\in \mathfrak{S}_{i+j}.\]
For convenience, we
interpret $\beta_{0i}$ and $\beta_{i0}$ as the identity map on
$V^{\otimes i}$.  It is easy to verify the following equalities of $\beta$ on $T(V)$:
\begin{equation}\label{beta}
\beta_{m+n,k}=(\beta_{mk}\otimes\mbox{id}_V^{\otimes n})(\mbox{id}_V^{\otimes m}\otimes\beta_{nk}),\,
\beta_{m,n+k}=(\mbox{id}_V^{\otimes n}\otimes\beta_{mk})(\beta_{mn}\otimes\mbox{id}_V^{\otimes k}),\,m,n,k\geq0,
\end{equation}
which will be used throughout the paper. For the convenience of constructions below, we also introduce \textit{shuffles of permutations}, as the set of shuffle representatives of $\mathfrak{S}_{i+j}/(\mathfrak{S}_i\times\mathfrak{S}_j)$,
$$\mathfrak{S}_{i,j}:=\left\{w\in\mathfrak{S}_{i+j}\,\bigg|\,{w(1)<\cdots<w(i)\atop w(i+1)<\cdots<w(i+j)}\right\}.$$
In particular, let $\mathfrak{S}_{i,0}=\mathfrak{S}_{0,i}=\{(1)\}$ by convention.

Given a braided commutative unital $\bk$-algebra $A$ with product $\mu$ and braiding $\sigma$, first we take $$\sha^+_\sigma(A):=\sha^+_{\sigma,\bk,\lambda}(A) :=\bigoplus_{i\geq0}A^{\otimes i},$$
equipped with the \textit{quantum quasi-shuffle algebra} introduced in \cite{JRZ}, with the product $*_{\sigma,\lambda}$ defined recursively as a sum $*_\sigma=\sum_{i,j\geq 0} *_{\sigma(i,j)}$. Here the maps $$*_{\sigma(i,j)}:V^{\otimes i}\underline{\otimes}V^{\otimes j}\longrightarrow \sha^+_\sigma(A)$$ are recursively defined by

\begin{equation}
\begin{split}
1_\bk*_{\sigma(0,0),\lambda}1_\bk&:=1_\bk, \\
1_\bk*_{\sigma(0,j),\lambda}\mathfrak{a}& :=\mathfrak{a}*_{\sigma(i,0),\lambda}1_\bk=\mathfrak{a},\\
\mathfrak{a}*_{\sigma(i,j),\lambda}\mathfrak{b}&:=
(\id_A\otimes*_{\sigma(i-1,j),\lambda})\left(\id_A^{\otimes(i+j)}
+\id_A\otimes\beta_{i-1,1}\otimes\id_A^{\otimes(j-1)}\right) (\mathfrak{a}\otimes\mathfrak{b})\\
&+\lambda(\mu\otimes*_{\sigma(i-1,j-1),\lambda}) \left(\id_A\otimes\beta_{i-1,1}\otimes\id_A^{\otimes(j-1)}\right)
(\mathfrak{a}\otimes\mathfrak{b}),
\end{split}
\label{eq:qqs}
\end{equation}
where $\mathfrak{a}\in A^{\otimes i},\,\mathfrak{b}\in A^{\otimes j}$ with $i,j>0$.

Then let
\[\sha_{\sigma,\bk,\lambda}(A) :=A\otimes\sha^+_{\sigma,\bk,\lambda}(A) =\bigoplus_{i\geq1}A^{\otimes i},\]
abbreviated as $\sha_\sigma(A)=A\otimes\sha^+_\sigma(A)$. So its underlying space is the same as $\sha(A)$, but with the modified multiplication $\diamond_{\sigma,\lambda}$ from  $\diamond_\lambda$ defined by
\begin{equation}\label{qmsh}
\mathfrak{a}\diamond_{\sigma,\lambda}\mathfrak{b}
=(\mu\otimes*_{\sigma(m,n),\lambda})
(\id_A\otimes\beta_{m,1}\otimes\id_A^{\otimes n})
(\mathfrak{a}\otimes\mathfrak{b}),
\end{equation}
for $\mathfrak{a}\in A^{\otimes(m+1)},\,\mathfrak{b}\in A^{\otimes(n+1)}$ with $m,n\geq0$. Namely, $(\sha_\sigma(A),\diamond_{\sigma,\lambda},\beta)$ is the braided tensor product algebra of $(A,\mu,\sigma)$ and  $(\sha^+_\sigma(A),*_{\sigma,\lambda},\beta)$.

We note that when $(A,\mu,\sigma)$ is commutative, additional conditions are needed for $(\sha_\sigma(A),\diamond_{\sigma,\lambda},\beta)$ to be commutative. In fact,  $(\sha_\sigma(A),\diamond_{\sigma,\lambda},\beta)$ is commutative and $\beta^2=\id_{\shap_\sigma(A)}^{\otimes 2}$ if and only if $\sigma^2=\id_A^{\otimes 2}$. See~\cite[Lemma 3]{Bae} and \cite[Theorem 15]{JRZ}.

There is the canonical embedding
\[j_A:A\rightarrow\sha_\sigma(A),\,a\mapsto a\otimes 1_\bk,\]
as braided algebras. Also, take the linear operator $P_{\shap(A)}$ on $\sha_\sigma(A)$ from \eqref{eq:rbo} but  denote by $P_{\shap_\sigma(A)}$ for distinction. Then we have the following result.
\begin{prop}\label{wrbr}
For any braided unital algebra $(A,\mu,\sigma)$, the quadruple \[(\sha_\sigma(A),\diamond_{\sigma,\lambda},P_{\shap_\sigma(A)},\beta)\]
is a strongly braided unital Rota-Baxter algebra of weight $\lambda$.
\end{prop}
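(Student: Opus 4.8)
The plan is to verify, one at a time, the three ingredients of a strongly braided unital Rota--Baxter algebra: that $(\sha_\sigma(A),\diamond_{\sigma,\lambda},\beta)$ is a braided unital algebra, that $P:=P_{\shap_\sigma(A)}$ is a Rota--Baxter operator of weight $\lambda$, and that $P$ satisfies both relations in \eqref{wbr}, the last upgrading the structure to a \emph{strongly} braided one by \cite[Proposition~3.4]{Jian1}; I write $\mathfrak{a}=a_0\otimes\cdots\otimes a_m$ and $\mathfrak{b}=b_0\otimes\cdots\otimes b_n$, so that $P(\mathfrak{a})=1_A\otimes\mathfrak{a}\in A^{\otimes(m+2)}$. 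For the braided algebra structure, I would invoke that $(\sha^+_\sigma(A),*_{\sigma,\lambda},\beta)$ is a braided unital algebra \cite{JRZ} and that $\beta$ is the braid-group lift of $\sigma$ on $T(A)$; the decomposition identities \eqref{beta} are precisely the cocycle compatibility needed to form the braided tensor product, so $(\sha_\sigma(A),\diamond_{\sigma,\lambda},\beta)=(A,\mu,\sigma)\,\underline{\otimes}\,(\sha^+_\sigma(A),*_{\sigma,\lambda},\beta)$ is a braided algebra, which gives \eqref{ba1}. For \eqref{ba2} I would first record the elementary lemma that $\beta$ lets any tensor factor equal to $1_A$ pass straight through --- a reduced-word induction from \eqref{ba2} for $(A,\mu,\sigma)$, since each generator $\sigma_i$ acting next to a $1_A$ is the flip --- and then apply it to $\beta_{1,i}$ and $\beta_{i,1}$, the unit of $\sha_\sigma(A)$ being $1_A=1_A\otimes1_\bk$.

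For the Rota--Baxter identity I would use \eqref{qmsh} to rewrite each of $P(\mathfrak{a})\diamond_{\sigma,\lambda}P(\mathfrak{b})$, $\mathfrak{a}\diamond_{\sigma,\lambda}P(\mathfrak{b})$, $P(\mathfrak{a})\diamond_{\sigma,\lambda}\mathfrak{b}$ and $\mathfrak{a}\diamond_{\sigma,\lambda}\mathfrak{b}$ in terms of the quantum quasi-shuffle product $*_{\sigma,\lambda}$, using throughout that $1_A$ is transparent to every $\beta_{p,q}$ (by the lemma above) and a two-sided unit for $\mu$. One finds $P(\mathfrak{a})\diamond_{\sigma,\lambda}P(\mathfrak{b})=1_A\otimes(\mathfrak{a}*_{\sigma,\lambda}\mathfrak{b})$ and $\mathfrak{a}\diamond_{\sigma,\lambda}P(\mathfrak{b})=a_0\otimes\big((a_1\otimes\cdots\otimes a_m)*_{\sigma,\lambda}\mathfrak{b}\big)$, while $P(\mathfrak{a})\diamond_{\sigma,\lambda}\mathfrak{b}$ and $\mathfrak{a}\diamond_{\sigma,\lambda}\mathfrak{b}$ have as leading letter, respectively, the first letter of $\mathfrak{b}$ braided to the front, and the $\mu$-product of $a_0$ with that braided first letter, each followed by a quantum quasi-shuffle of the remaining words. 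After cancelling the common leading $1_A$, the Rota--Baxter identity $P(\mathfrak{a})\diamond_{\sigma,\lambda}P(\mathfrak{b})=P(\mathfrak{a}\diamond_{\sigma,\lambda}P(\mathfrak{b}))+P(P(\mathfrak{a})\diamond_{\sigma,\lambda}\mathfrak{b})+\lambda P(\mathfrak{a}\diamond_{\sigma,\lambda}\mathfrak{b})$ becomes exactly the one-step recursion for $\mathfrak{a}*_{\sigma,\lambda}\mathfrak{b}$ --- the standard reorganization of \eqref{eq:qqs} --- that sorts the product by its three possible leading letters; alternatively one may just cite \cite{Jian1} for this (non-strong) braided Rota--Baxter structure.

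For the relations \eqref{wbr}, which by \cite[Proposition~3.4]{Jian1} supply the ``strongly'' in the statement, I would split off the prepended $1_A$. For $\beta(P\otimes\id)=(\id\otimes P)\beta$, write $A^{\otimes(m+2)}=A^{\otimes1}\otimes A^{\otimes(m+1)}$ with leading factor $1_A$ and use the first identity in \eqref{beta} in the form $\beta_{m+2,n+1}=(\beta_{1,n+1}\otimes\id_A^{\otimes(m+1)})(\id_A\otimes\beta_{m+1,n+1})$; applying this to $1_A\otimes\mathfrak{a}\,\underline{\otimes}\,\mathfrak{b}$ and using $\beta_{1,n+1}(1_A\otimes w)=w\otimes1_A$ (from \eqref{ba2}) to carry the $1_A$ into the right slot yields the relation. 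The companion relation $\beta(\id\otimes P)=(P\otimes\id)\beta$ follows symmetrically from the second identity in \eqref{beta}. No commutativity of $A$ enters the argument.

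The conceptual content here is light; I expect the whole difficulty to lie in the braiding bookkeeping. The one genuine point is that the recursion \eqref{eq:qqs} is awkward to iterate directly, so the real work in the Rota--Baxter step is to pass to (or quote) the ``standard'' recursive form of $*_{\sigma,\lambda}$ and then keep track, letter by letter, of which factors have been braided past which; once the transparency lemma for $1_A$ is in place, each required identity reduces to matching up these bookkeeping data.
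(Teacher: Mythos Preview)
Your proposal is correct and, for the new content of the proposition (the two relations in \eqref{wbr}), takes exactly the paper's approach: split off the prepended $1_A$ via the decomposition identities \eqref{beta} and let it pass through using \eqref{ba2}. The only difference is cosmetic: the paper simply cites \cite[Theorem 2.4, Example 3.2]{Jian1} for the (non-strong) braided Rota--Baxter structure, whereas you sketch that verification as well --- but you already note this alternative yourself.
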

\begin{proof}
In fact by \cite[Theorem 2.4, Example 3.2]{Jian1},  $(\sha_\sigma(A),\diamond_{\sigma,\lambda},P_{\shap_\sigma(A)},\beta)$ is a braided unital Rota-Baxter algebra. So we further check conditions in \eqref{wbr} as follows.
For $\mathfrak{a}\in A^{\otimes(m+1)}$ and $\mathfrak{b}\in A^{\otimes(n+1)}$ with $m,n\geq0$,
\begin{align*}
&\begin{split}
\beta&(P_{\shap_\sigma(A)}\otimes\id_{\shap_\sigma(A)})(\mathfrak{a}\otimes\mathfrak{b})
=\beta_{m+2,n+1}((1_A\otimes\mathfrak{a})\otimes\mathfrak{b})\\
&=(\beta_{1,n+1}\otimes\id_A^{\otimes
(m+1)})(\id_A\otimes\beta_{m+1,n+1})(1_A\otimes\mathfrak{a}\otimes\mathfrak{b})\\
&=(\id_{\shap_\sigma(A)}\otimes P_{\shap_\sigma(A)})\beta(\mathfrak{a}\otimes\mathfrak{b}),
\end{split}\\
&\begin{split}
\beta&(\id_{\shap_\sigma(A)}\otimes P_{\shap_\sigma(A)})(\mathfrak{a}\otimes\mathfrak{b})
=\beta_{m+1,n+2}(\mathfrak{a}\otimes(1_A\otimes\mathfrak{b}))\\
&=(\id_A\otimes\beta_{m+1,n+1})
(\beta_{m+1,1}\otimes\id_A^{\otimes(n+1)})
(\mathfrak{a}\otimes1_A\otimes\mathfrak{b})\\
&=(\id_A\otimes\beta_{m+1,n+1})
(1_A\otimes\mathfrak{a}\otimes\mathfrak{b})\\
&=(P_{\shap_\sigma(A)}\otimes \id_{\shap_\sigma(A)})\beta(\mathfrak{a}\otimes\mathfrak{b}),
\end{split}
\end{align*}
where we have used equality \eqref{beta} and condition \eqref{ba2}.
\end{proof}

Now we are in the position to give our first main theorem, as a braided generalization of the mixable shuffle (that is, quasi-shuffle) construction of free commutative Rota-Baxter algebras~\cite[Theorem 4.1]{GK1} recalled at the beginning of this subsection.

\begin{theorem}\label{fbcr}
For any braided commutative unital algebra $(A,\mu,\sigma)$ with $\sigma^2=\id_A^{\otimes 2}$, the quadruple $(\sha_\sigma(A),\diamond_{\sigma,\lambda},P_{\shap_\sigma(A)},\beta)$ is the free object in $\scr_\lambda$. More precisely, for any $(R,P_R,\tau)$ in $\scr_\lambda$ with braided algebra homomorphism $\varphi:A\rightarrow R$, there exists a unique morphism $\bar{\varphi}:\sha_\sigma(A)\rightarrow R$ in $\scr_\lambda$ such that the following commutative diagram holds,
\[\xymatrix@=2em{A\ar@{->}[d]_-{\varphi}\ar@{->}[r]^-{j_A}& {\sha_\sigma(A)}\ar@{.>}[dl]^-{\bar{\varphi}}\\
R&}\]
\end{theorem}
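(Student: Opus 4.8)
The plan is to adapt the Guo--Keigher proof of freeness of the mixable shuffle Rota--Baxter algebra~\cite[Theorem 4.1]{GK1}, carrying the braidings along at every step. As a preliminary, $(\sha_\sigma(A),\diamond_{\sigma,\lambda},P_{\shap_\sigma(A)},\beta)$ does lie in $\scr_\lambda$: it is a strongly braided unital Rota--Baxter algebra of weight $\lambda$ by Proposition~\ref{wrbr}, and the hypothesis $\sigma^2=\id_A^{\otimes 2}$ makes it braided commutative with $\beta^2=\id^{\otimes 2}$ by the cited facts from \cite[Lemma 3]{Bae} and \cite[Theorem 15]{JRZ}.

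For uniqueness I would first observe that $\sha_\sigma(A)$ is generated as a Rota--Baxter algebra by $j_A(A)$. Unwinding the defining formula \eqref{qmsh} when the left factor has length one yields
\[a_0\otimes a_1\otimes\cdots\otimes a_n = j_A(a_0)\diamond_{\sigma,\lambda}P_{\shap_\sigma(A)}(a_1\otimes\cdots\otimes a_n),\]
so induction on $n$ shows that every $A^{\otimes(n+1)}$ sits inside the Rota--Baxter subalgebra generated by $j_A(A)$. Hence a morphism $\bar\varphi$ in $\scr_\lambda$ with $\bar\varphi j_A=\varphi$, if it exists, is forced to satisfy $\bar\varphi|_{A^{\otimes 1}}=\varphi$ and
\[\bar\varphi(a_0\otimes a_1\otimes\cdots\otimes a_n)=\varphi(a_0)\,P_R\!\big(\bar\varphi(a_1\otimes\cdots\otimes a_n)\big),\]
which proves uniqueness and simultaneously \emph{defines} the candidate $\bar\varphi$.

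It then remains to verify that this $\bar\varphi$ is a morphism in $\scr_\lambda$, i.e.\ a homomorphism of Rota--Baxter algebras and of braided algebras. Linearity, $\bar\varphi(1_A)=1_R$, $\bar\varphi j_A=\varphi$, and $\bar\varphi P_{\shap_\sigma(A)}=P_R\bar\varphi$ follow at once from the recursive definition together with $\varphi(1_A)=1_R$. The substantial claims are multiplicativity, $\bar\varphi(\mathfrak a\diamond_{\sigma,\lambda}\mathfrak b)=\bar\varphi(\mathfrak a)\,\bar\varphi(\mathfrak b)$, and braiding-compatibility, $(\bar\varphi\otimes\bar\varphi)\beta=\tau\,(\bar\varphi\otimes\bar\varphi)$; I would prove both by a single induction on the total tensor degree of $\mathfrak a\in A^{\otimes(m+1)}$ and $\mathfrak b\in A^{\otimes(n+1)}$. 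Each inductive step unwinds $\diamond_{\sigma,\lambda}$ and $*_{\sigma,\lambda}$ via the recursions \eqref{eq:qqs}, \eqref{qmsh}, uses \eqref{beta} to split the braidings $\beta_{i,j}$ that appear, and then matches the two sides by repeated use of the Rota--Baxter identity of weight $\lambda$ in $R$, the braided-algebra axioms \eqref{ba1}--\eqref{ba2} of $(R,\mu_R,\tau)$, the relation $\tau(P_R\otimes P_R)=(P_R\otimes P_R)\tau$, the strong-braidedness conditions \eqref{wbr} of $R$ (which let $P_R$ slide through $\tau$), and the ground-level compatibility $(\varphi\otimes\varphi)\sigma=\tau(\varphi\otimes\varphi)$.

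The main obstacle is exactly this intertwined induction. Unlike the classical situation, the recursive rule \eqref{eq:qqs} for the quantum quasi-shuffle product contains the braiding operators $\beta_{i-1,1}$, so one cannot propagate $\bar\varphi$ across $\diamond_{\sigma,\lambda}$ without already knowing how $\bar\varphi$ interacts with $\beta$; conversely, checking $(\bar\varphi\otimes\bar\varphi)\beta_{m+1,n+1}=\tau(\bar\varphi\otimes\bar\varphi)$ in total degree $m+n$ naturally re-expresses $\bar\varphi(\mathfrak a)$ and $\bar\varphi(\mathfrak b)$ as $\diamond_{\sigma,\lambda}$-products of $P_{\shap_\sigma(A)}$-images and thereby calls on multiplicativity in lower degrees. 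The delicate point is to formulate the joint induction hypothesis in the right way --- most plausibly as a slightly stronger ``mixed'' statement in which $\bar\varphi$ is applied to only one of the two tensor slots while the other is still manipulated inside $\sha_\sigma(A)$ --- so that the recursions on the $\sha_\sigma(A)$-side and the $R$-side can be played off against each other one letter at a time. Granting such a setup, each step of the induction is a long but mechanical computation with the identities listed above.
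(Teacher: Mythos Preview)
Your overall strategy---define $\bar\varphi$ recursively from the factorization $a_0\otimes\mathfrak a'=j_A(a_0)\diamond_{\sigma,\lambda}P_{\shap_\sigma(A)}(\mathfrak a')$, deduce uniqueness, then verify the morphism axioms by induction on tensor degree---is exactly what the paper does.

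The one place your proposal diverges is the ``main obstacle'': you anticipate having to run multiplicativity and braiding-compatibility as a single intertwined induction, possibly strengthened by a mixed auxiliary statement. In fact this intertwining is unnecessary, and the paper avoids it. The point you are missing is that the braiding check $(\bar\varphi\otimes\bar\varphi)\beta=\tau(\bar\varphi\otimes\bar\varphi)$ does \emph{not} require knowing that $\bar\varphi$ is multiplicative for $\diamond_{\sigma,\lambda}$. The recursive formula already writes $\bar\varphi(\mathfrak a)=\mu_R\bigl(\varphi(a_0)\otimes P_R\bar\varphi(\mathfrak a')\bigr)$ as a product \emph{in $R$}, so one can push $\tau$ past $\bar\varphi\otimes\bar\varphi$ using only the braided-algebra axioms \eqref{ba1} of $(R,\mu_R,\tau)$, the strong-braidedness conditions \eqref{wbr} for $P_R$, and the induction hypothesis in lower degree. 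The paper therefore proves braiding-compatibility \emph{first}, as a standalone induction on $m+n$, and only afterwards proves multiplicativity---which genuinely does invoke the already-established braiding compatibility (as well as the braided commutativity $\mu_R\tau=\mu_R$ of $R$, which you should also expect to use). Decoupling the two inductions in this order removes the delicacy you are worried about.
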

\begin{proof}
By Proposition~\ref{wrbr}, $(\sha_\sigma(A),\diamond_{\sigma,\lambda},P_{\shap_\sigma(A)},\beta)$ is a strongly braided Rota-Baxter algebra of weight $\lambda$. We also check that it is commutative when $(A,\mu,\sigma)$ is commutative and $\sigma^2=\id_A^{\otimes 2}$. In fact, for $\mathfrak{a}\in A^{\otimes(m+1)}$ and $\mathfrak{b}\in A^{\otimes(n+1)},\,m,n\geq0$,
\[\begin{split}
\diamond_{\sigma,\lambda}&\beta(\mathfrak{a}\otimes\mathfrak{b})=(\mu\otimes*_{\sigma(n,m),\lambda})(\id_A\otimes\beta_{n,1}\otimes\id_A^{\otimes m})\beta_{m+1,n+1}(\mathfrak{a}\otimes\mathfrak{b})\\
&=(\mu\otimes*_{\sigma(n,m),\lambda})((\id_A\otimes\beta_{n,1})\beta_{1,n+1}\otimes\id_A^{\otimes m})(\id_A\otimes \beta_{m,n+1})(\mathfrak{a}\otimes\mathfrak{b})\\
&=(\mu\otimes*_{\sigma(n,m),\lambda})(\beta_{1,1}\otimes\id_A^{\otimes(m+n)})(\id_A\otimes \beta_{m,n+1})(\mathfrak{a}\otimes\mathfrak{b})\\
&=(\mu\otimes*_{\sigma(n,m),\lambda})(\beta_{1,1}\otimes\beta_{m,n})(\id_A\otimes\beta_{m,1}\otimes\id_A^{\otimes n})(\mathfrak{a}\otimes\mathfrak{b})\\
&=(\mu\otimes*_{\sigma(m,n),\lambda})(\id_A\otimes\beta_{m,1}\otimes\id_A^{\otimes n})(\mathfrak{a}\otimes\mathfrak{b})=\mathfrak{a}\diamond_{\sigma,\lambda}\mathfrak{b},
\end{split}\]
where we use \eqref{beta} and \eqref{qmsh}, also $\beta^2=\id_{\shap_\sigma(A)}^{\otimes 2}$ and $*_{\sigma,\lambda}\beta=*_{\sigma,\lambda}$ when $\sigma^2=\id_A^{\otimes 2}$ as in \cite[Lemma 14, Theorem 15]{JRZ}.

To verify the universal property of $\sha_\sigma(A)$, fix a strongly braided commutative Rota-Baxter algebra $(R,P_R,\tau)$ and let  $\varphi:A\rightarrow R$ be a braided algebra homomorphism. Define the linear map $\bar{\varphi}:\sha_\sigma(A)\rightarrow R$ as follows.
For $\mathfrak{a}=a_0\otimes \cdots \otimes a_m\in A^{\otimes(m+1)},\,m\geq0$, let
\begin{equation}\label{varp}
\bar{\varphi}(\mathfrak{a}):=\varphi(a_0)P_R(\varphi(a_1)P_R(\varphi(a_2)\cdots)).
\end{equation}
In fact, one can derive it by induction on $m$. For $m=0$, since we need $\bar{\varphi}j_A=\varphi$, it requires $\bar{\varphi}(a)=a$ for $a\in A$.
For any $\mathfrak{a}=a_0\otimes\mathfrak{a}'\in A^{\otimes(m+1)},\,m>0$, we have $\mathfrak{a}=a_0\diamond_{\sigma,\lambda}P_{\shap_\sigma(A)}(\mathfrak{a}')$ and thus
\[\bar{\varphi}(\mathfrak{a})=\bar{\varphi}(a_0)\bar{\varphi}(P_{\shap_\sigma(A)}(\mathfrak{a}'))=\varphi(a_0)P_R(\bar{\varphi}(\mathfrak{a}')).\]
That also implies the uniqueness of $\bar{\varphi}$.

Next we check that such $\bar{\varphi}:\sha_\sigma(A)\rightarrow R$ just defined is a
homomorphism of braided Rota-Baxter algebras. First by the construction of $\bar{\varphi}$, we have
\[\bar{\varphi}(P_{\shap_\sigma(A)}(\mathfrak{a}))=
\bar{\varphi}(1_A\otimes\mathfrak{a})
=\varphi(1_A)P_R(\bar{\varphi}(\mathfrak{a}))
=P_R(\bar{\varphi}(\mathfrak{a})).\]
Thus $\bar{\varphi}P_{\shap_\sigma(A)}=P_R\bar{\varphi}$.

For the commutativity of $\bar{\varphi}$ with respect to the braidings, let $\mathfrak{a}=a_0\otimes\mathfrak{a}'\in A^{\otimes(m+1)},\mathfrak{b}=b_0\otimes\mathfrak{b}'\in A^{\otimes(n+1)},\,m,n\geq0$, and we prove the commutativity by induction on $m+n$. When $m=n=0$, i.e. $a,b\in A$, we have
\[(\bar{\varphi}\otimes\bar{\varphi})\beta(a\otimes b)=(\varphi\otimes\varphi)\sigma(a\otimes b)
=\sigma(\varphi\otimes\varphi)(a\otimes b)=\tau(\bar{\varphi}\otimes\bar{\varphi})(a\otimes b).\]
For $m,n>0$,  we get
\begin{align*}
\tau(\bar{\varphi}&\otimes\bar{\varphi})(\mathfrak{a}\otimes\mathfrak{b})=\tau(\varphi(a_0)P_R(\bar{\varphi}(\mathfrak{a}'))\otimes\varphi(b_0)P_R(\bar{\varphi}(\mathfrak{b}')))\\
&=(\id_R\otimes\mu_R)\tau_1\tau_2(\varphi(a_0)\otimes P_R(\bar{\varphi}(\mathfrak{a}'))\otimes\varphi(b_0)P_R(\bar{\varphi}(\mathfrak{b}')))\\
&=(\id_R\otimes\mu_R)\tau_1(\id_R\otimes\mu_R\otimes\id_R)\tau_3\tau_2(\varphi(a_0)\otimes P_R(\bar{\varphi}(\mathfrak{a}'))\otimes\varphi(b_0)\otimes P_R(\bar{\varphi}(\mathfrak{b}')))\\
&=(\mu_R\otimes\mu_R)\tau_2\tau_1\tau_3\tau_2(\varphi(a_0)\otimes P_R(\bar{\varphi}(\mathfrak{a}'))\otimes\varphi(b_0)\otimes P_R(\bar{\varphi}(\mathfrak{b}')))\\
&=(\mu_R\otimes\mu_R)(\varphi\otimes P_R\bar{\varphi}\otimes\varphi\otimes P_R\bar{\varphi})\beta_2\beta_1\beta_3\beta_2
(a_0\otimes\mathfrak{a}'\otimes b_0\otimes\mathfrak{b}')\\
&=(\bar{\varphi}\otimes\bar{\varphi})\beta(\mathfrak{a}\otimes\mathfrak{b}),
\end{align*}
where we have used \eqref{ba1} for the second to fourth equalities, then \eqref{wbr} and the induction hypothesis for the fifth one. If exactly one of $m,n$ is 0, the situation is similar to check. Hence, we have proved that $\tau(\bar{\varphi}\otimes\bar{\varphi})=(\bar{\varphi}\otimes\bar{\varphi})\beta$.

To finish the proof, we need to prove the multiplicity of $\bar{\varphi}$. Given $\mathfrak{a}=a_0\otimes\mathfrak{a}'\in A^{\otimes(m+1)},\mathfrak{b}=b_0\otimes\mathfrak{b}'\in A^{\otimes(n+1)},\,m,n\geq0$,
we prove the multiplicity by induction on $m+n$. When $m=n=0$, i.e. $a,b\in A$, we have \[\bar{\varphi}(a\diamond_{\sigma,\lambda}b)=\bar{\varphi}(ab)
=\varphi(ab)=\varphi(a)\varphi(b)=\bar{\varphi}(a)\bar{\varphi}(b).\]
If exactly one of $m,n$ is 0, then
\[\bar{\varphi}(a\diamond_{\sigma,\lambda}\mathfrak{b})=
\bar{\varphi}(ab_0\otimes\mathfrak{b}')=\varphi(ab_0)P_R(\bar{\varphi}(\mathfrak{b}'))
=\varphi(a)\varphi(b_0)P_R(\bar{\varphi}(\mathfrak{b}'))
=\bar{\varphi}(a)\bar{\varphi}(\mathfrak{b}),\]
when $m=0,n>0$, and
\[\begin{split}
\bar{\varphi}&(\mathfrak{a}\diamond_{\sigma,\lambda}b)=
\mu_R(\varphi\otimes P_R\bar{\varphi})(\mu\otimes\id_A^{\otimes m})(\id_A\otimes\beta_{m,1})(\mathfrak{a}\otimes b)\\
&=\mu_R(\mu_R\otimes P_R)
(\varphi\otimes\varphi\otimes\bar{\varphi})
(\id_A\otimes\beta_{m,1})(\mathfrak{a}\otimes b)\\
&=\mu_R(\mu_R\otimes P_R)
(\id_R\otimes\tau)(\varphi\otimes\bar{\varphi}\otimes\varphi)
(\mathfrak{a}\otimes b)\\
&=\mu_R(\id_R\otimes\mu_R\tau)(\id_R\otimes P_R\otimes\id_R)(\varphi\otimes\bar{\varphi}\otimes\varphi)
(\mathfrak{a}\otimes b)\\
&=\mu_R(\id_R\otimes\mu_R)(\varphi\otimes P_R\bar{\varphi}\otimes\varphi)
(\mathfrak{a}\otimes b)=\bar{\varphi}(\mathfrak{a})\bar{\varphi}(b)
\end{split}\]
when $m>0,n=0$. Here we have used the commutativity of $\bar{\varphi}$ with respect to the braidings for the third equality, the associativity of $\mu_R$ together with condition \eqref{wbr} for the fourth one, and the commutativity of $\mu_R$ for the fifth one.

For $m,n>0$, first note that
\[\mathfrak{a}\diamond_{\sigma,\lambda}\mathfrak{b}=\diamond_{\sigma(1,m+n+1),\lambda}(\id_A\otimes\diamond_{\sigma(m+1,n+1),\lambda})(\mu\otimes P_{A^{\otimes m}}\otimes P_{A^{\otimes n}})(\id_A\otimes\beta_{m,1}\otimes\id_A^{\otimes n})(\mathfrak{a}\otimes\mathfrak{b})\]
by definition, with $P_{A^{\otimes i}}$ denoting the restriction of $P_{\shap_\sigma(A)}$ to the subspace $A^{\otimes i},\,i>0$.   Therefore,
\begin{align*}
\bar{\varphi}(\mathfrak{a}\diamond_{\sigma,\lambda}\mathfrak{b})&
=\bar{\varphi}\diamond_{\sigma(1,m+n+1),\lambda}(\id_A\otimes\diamond_{\sigma(m+1,n+1),\lambda})(\mu\otimes P_{A^{\otimes m}}\otimes P_{A^{\otimes n}})(\id_A\otimes\beta_{m,1}\otimes\id_A^{\otimes n})(\mathfrak{a}\otimes\mathfrak{b})\\
&=\mu_R(\varphi\otimes\bar{\varphi})(\id_A\otimes\diamond_{\sigma(m+1,n+1),\lambda})
(\mu\otimes P_{A^{\otimes m}}\otimes P_{A^{\otimes n}})(\id_A\otimes\beta_{m,1}\otimes\id_A^{\otimes n})(\mathfrak{a}\otimes\mathfrak{b})\\
&=\mu_R(\mu_R\otimes\bar{\varphi})(\varphi\otimes\varphi\otimes P_{\shap_\sigma(A)}\diamond_{\sigma,\lambda})
(\id_A^{\otimes 2}\otimes(P_{A^{\otimes m}}\otimes\id_A^{\otimes n}+\id_A^{\otimes m}\otimes P_{A^{\otimes n}}+\lambda\id_A^{\otimes m}\otimes\id_A^{\otimes n}))\\
&\quad(\id_A\otimes\beta_{m,1}\otimes\id_A^{\otimes n})(\mathfrak{a}\otimes\mathfrak{b})\\
&=\mu_R(\mu_R\otimes P_R)(\varphi\otimes\varphi\otimes\bar{\varphi}\diamond_{\sigma,\lambda})
(\id_A^{\otimes 2}\otimes(P_{A^{\otimes m}}\otimes\id_A^{\otimes n}+\id_A^{\otimes m}\otimes P_{A^{\otimes n}}+\lambda\id_A^{\otimes m}\otimes\id_A^{\otimes n}))\\
&\quad(\id_A\otimes\beta_{m,1}\otimes\id_A^{\otimes n})(\mathfrak{a}\otimes\mathfrak{b})\\
&=\mu_R(\mu_R\otimes P_R\mu_R)(\varphi\otimes\varphi\otimes\bar{\varphi}\otimes\bar{\varphi})
(\id_A^{\otimes 2}\otimes(P_{A^{\otimes m}}\otimes\id_A^{\otimes n}+\id_A^{\otimes m}\otimes P_{A^{\otimes n}}+\lambda\id_A^{\otimes m}\otimes\id_A^{\otimes n}))\\
&\quad(\id_A\otimes\beta_{m,1}\otimes\id_A^{\otimes n})(\mathfrak{a}\otimes\mathfrak{b})\\
&=\mu_R(\mu_R\otimes P_R\mu_R)
(\id_R^{\otimes 2}\otimes(P_R\otimes\id_R+\id_R\otimes P_R+\lambda\id_R^{\otimes 2}))(\varphi\otimes\varphi\otimes\bar{\varphi}\otimes\bar{\varphi})\\
&\quad(\id_A\otimes\beta_{m,1}\otimes\id_A^{\otimes n})(\mathfrak{a}\otimes\mathfrak{b})\\
&=\mu_R(\mu_R\otimes\mu_R)(\varphi\otimes\varphi\otimes P_R\bar{\varphi}\otimes P_R\bar{\varphi})(\id_A\otimes\beta_{m,1}\otimes\id_A^{\otimes n})(\mathfrak{a}\otimes\mathfrak{b})\\
&=\mu_R(\mu_R\otimes\mu_R)(\id_R\otimes\tau\otimes\id_R)(\varphi\otimes P_R\bar{\varphi}\otimes\varphi\otimes P_R\bar{\varphi})(\mathfrak{a}\otimes\mathfrak{b})\\
&=\mu_R(\id_R\otimes\mu_R)(\id_R\otimes\mu_R\tau\otimes\id_R)(\varphi\otimes P_R\bar{\varphi}\otimes\varphi\otimes P_R\bar{\varphi})(\mathfrak{a}\otimes\mathfrak{b})\\
&=\mu_R(\mu_R\otimes\mu_R)(\varphi\otimes P_R\bar{\varphi}\otimes\varphi\otimes P_R\bar{\varphi})(\mathfrak{a}\otimes\mathfrak{b})=\bar{\varphi}(\mathfrak{a})\bar{\varphi}(\mathfrak{b}),
\end{align*}
where we have used the multiplicity of the previous case for the second equality, the multiplicity of $\varphi$ and the Rota-Baxter condition for the third one, the induction hypothesis for the fifth one, and the commutativity of $\mu_R$ for the last one.
\end{proof}

Next we consider the free object over a braided commutative algebra $(A,\mu,\sigma)$ (not nesessarily unital) in the category of braided communitative Rota-Baxter algebras. First let $\bar{A}=A\oplus\bk$ be the augmented unital algebra of $A$ with the natural embedding $\iota_A:A\rightarrow\bar{A},\,a\mapsto(a,0)$. Then $\bar{A}$ is an induced braided unital algebra with the braiding
\begin{align*}
&\sigma((a,x)\otimes(b,y))=(\iota_A\otimes\iota_A)\sigma(a\otimes b)+(0,y)\otimes(a,x)+(b,0)\otimes(0,x),
\end{align*}
for $a,b\in A$ and $x,y\in\bk$. Referring to the construction in \cite{GK} , we define
\[\sha_\sigma^0(A):=\bigoplus_{i\geq0}\bar{A}^{\otimes i}\otimes A.\]
It is easy to check that $\sha_\sigma^0(A)$ is a braided commutative Rota-Baxter subalgebra of $\sha_\sigma(\bar{A})$, with $j_A:A\rightarrow \sha_\sigma^0(A)$ as a restriction of the original embedding $j_{\bar{A}}$ to $A$.

As a braided version of \cite[Proposition~2.6]{GK}, we have

\begin{prop}\label{fbcr1}
For any braided commutative algebra $(A,\mu,\sigma)$ with $\sigma^2=\id_A^{\otimes 2}$, the quadruple $(\sha_\sigma^0(A),\diamond_{\sigma,\lambda},P_{\shap_\sigma(\bar{A})},\beta)$ satisfies a universal property similar to Theorem \ref{fbcr}: for any $(R,P_R,\tau)\in\scr^0_\lambda$ with braided algebra homomorphism $\varphi:A\rightarrow R$, there exists a unique morphism $\bar{\varphi}:\sha_\sigma^0(A)\rightarrow R$ in $\scr^0_\lambda$ such that
$\varphi=\bar{\varphi} j_A$.
\end{prop}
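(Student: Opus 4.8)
The plan is to reduce to Theorem~\ref{fbcr} applied to the unitalization $\bar A$, together with a direct verification on the subalgebra $\sha^0_\sigma(A)\subseteq\sha_\sigma(\bar A)$ that runs in parallel with the proof of Theorem~\ref{fbcr}.

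First I would check that $(\sha^0_\sigma(A),\diamond_{\sigma,\lambda},P_{\shap_\sigma(\bar A)},\beta)$ is an object of $\scr^0_\lambda$. Since $\sigma^2=\id_A^{\otimes 2}$, a short computation with the formula defining the induced braiding on $\bar A$ shows that $\bar\sigma^2=\id_{\bar A}^{\otimes 2}$ and that $(\bar A,\bar\mu,\bar\sigma)$ is braided commutative unital; hence Theorem~\ref{fbcr} applies to $\bar A$ and puts $(\sha_\sigma(\bar A),\diamond_{\sigma,\lambda},P_{\shap_\sigma(\bar A)},\beta)$ in $\scr_\lambda$. As already observed, $\sha^0_\sigma(A)$ is a braided commutative Rota-Baxter subalgebra of $\sha_\sigma(\bar A)$, closed in particular under $\beta$ and $P_{\shap_\sigma(\bar A)}$; since the identities \eqref{wbr} hold on all of $\sha_\sigma(\bar A)$ and only involve $\beta$ and $P_{\shap_\sigma(\bar A)}$, they restrict to $\sha^0_\sigma(A)$, so $\sha^0_\sigma(A)\in\scr^0_\lambda$.

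Now fix $(R,P_R,\tau)\in\scr^0_\lambda$ and a braided algebra homomorphism $\varphi:A\to R$, and let $\bar R:=R\oplus\bk$ be the algebra unitalization of $R$, equipped with the braiding $\bar\tau$ induced by $\tau$ (same formula as for $\bar A$) and with the unital extension $\bar{\varphi}_0:\bar A\to\bar R$ of $\varphi$. For uniqueness, I would observe that $\sha^0_\sigma(A)$ is generated as a braided Rota-Baxter algebra by $j_A(A)$: every spanning element $\bar a_0\otimes\cdots\otimes\bar a_{i-1}\otimes a_i$ of $\bar A^{\otimes i}\otimes A$ is, by multilinearity in the $\bar a_j$, a combination of elements built from $j_A(A)$ by alternately applying $P_{\shap_\sigma(\bar A)}$ (which prepends $1_{\bar A}$) and $\diamond_{\sigma,\lambda}$-multiplying on the left by an element of $j_A(A)$ (which prepends an $A$-entry). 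Hence any morphism $\bar\varphi$ in $\scr^0_\lambda$ with $\bar\varphi j_A=\varphi$ must satisfy
\[\bar\varphi(\bar a_0\otimes\cdots\otimes\bar a_{i-1}\otimes a_i)=\bar{\varphi}_0(\bar a_0)\,P_R\!\left(\bar{\varphi}_0(\bar a_1)\,P_R\!\left(\cdots P_R(\varphi(a_i))\cdots\right)\right),\]
and I would take this as the definition of $\bar\varphi$. It is well defined and takes values in $R$: the innermost term $\varphi(a_i)$ lies in $R$ since $a_i\in A$, each outer factor $\bar{\varphi}_0(\bar a_j)$ sends elements of $R$ to elements of $R$ because $R$ is an ideal of $\bar R$, and $P_R(R)\subseteq R$.

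It then remains to verify that $\bar\varphi$ is a homomorphism of braided Rota-Baxter algebras, which I would carry out by induction on the total tensor degree, following line by line the three verifications (compatibility with $P$, compatibility with the braidings, multiplicativity) in the proof of Theorem~\ref{fbcr}. Compatibility with the Rota-Baxter operators is immediate from the defining formula and $\bar{\varphi}_0(1_{\bar A})=1_{\bar R}$. For the other two, one repeats the manipulations of Theorem~\ref{fbcr} inside the braided unital algebra $(\bar R,\bar\mu,\bar\tau)$, using conditions \eqref{ba1} for $\bar R$, the strong braided Rota-Baxter conditions \eqref{wbr} for $(R,P_R,\tau)$, the commutativity of $\mu_R$, and the inductive hypothesis, exactly as before. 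The main point requiring care — and the place I expect to be the only real obstacle — is that $\bar R$ carries no Rota-Baxter operator, so at every step one must confirm that each occurrence of $P_R$ is applied to an element of $R$ and never to a pure multiple of the unit; this is precisely what the placement of the $A$-factor in $\sha^0_\sigma(A)$ ensures, through the shape of the nested formula above. Once multiplicativity and braided compatibility are established, $\bar\varphi$ is the desired (unique) morphism in $\scr^0_\lambda$ with $\bar\varphi j_A=\varphi$.
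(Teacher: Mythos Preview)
Your proposal is correct and follows essentially the same approach as the paper. The paper defines $\bar\varphi$ by the same nested formula, written there as $\bar{\varphi}(\mathfrak{a})=(\varphi(a'_0)+x\,\id_R)P_R(\bar{\varphi}(a_1\otimes\cdots\otimes a_m))$ with $a_0=(a'_0,x)\in\bar A$, which is exactly your formula once one identifies the action of $\bar{\varphi}_0(\bar a_0)\in\bar R$ on $R$ with left multiplication by $\varphi(a'_0)$ plus $x\,\id_R$; the paper then simply asserts that the verification that $\bar\varphi$ is a morphism is ``guaranteed by its augmented unital case in Theorem~\ref{fbcr},'' whereas you spell out more carefully the passage through $\bar R$ and the caveat that $P_R$ is only ever applied to elements of $R$.
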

\begin{proof}
We only need to check the existence and uniqueness of the morphism $\bar{\varphi}$.
It can be constructed as in \eqref{varp}. For any $\mathfrak{a}=a_0\otimes a_1\otimes\cdots\otimes a_m\in\bar{A}^{\otimes m}\otimes A,\,m\geq0$, we define $\bar{\varphi}(\mathfrak{a})$ by induction on $m$. If $m=0$, then $\mathfrak{a}\in A$, and we need $\bar{\varphi}(\mathfrak{a})=\varphi(\mathfrak{a})$. For $m>0$, denote $a=(a'_0,x)\in \bar{A}$ and define
	 \[\bar{\varphi}(\mathfrak{a})=(\varphi(a'_0)+x\mbox{id}_R)P_R(\bar{\varphi}(a_1\otimes\cdots\otimes a_m)).\]
	In fact, as $\bar{\varphi}$ is required to be a homomorphism of braided Rota-Baxter algebras,
	there is
	\[\begin{split}
	\bar{\varphi}(\mathfrak{a})
&=\bar{\varphi}((a'_0,0)\diamond_{\sigma,\lambda}((0,1_\bk)\otimes a_1\otimes\cdots\otimes a_m)+(0,x)\otimes a_1\otimes\cdots\otimes a_m)\\
&=(\bar{\varphi}((a'_0,0))+x\mbox{id}_R)\bar{\varphi}(P_{\shap_\sigma(\bar{A})}(a_1\otimes\cdots\otimes a_m))=(\varphi(a'_0)+x\mbox{id}_R)P_R(\bar{\varphi}(a_1\otimes\cdots\otimes a_m)).
	\end{split}\]
	Thus this is the only way to define the desired homomorphism $\bar{\varphi}$. On the other hand, the condition that $\bar{\varphi}$ is a homomorphism in $\scr^0_\lambda$ such that $\varphi=\bar{\varphi}\circ j_A$ is guaranteed by its augmented unital case in Theorem \ref{fbcr}.
\end{proof}

It is interesting to further consider the realization of the free object in $\bcr_\lambda$ consisting of braided commutative unital Rota-Baxter algebras of weight $\lambda$.
	
Let $\sca$ be the category of braided commutative unital algebras with symmetric braidings, and $F:\scr_\lambda\rightarrow\sca$ be the forgetful functor ignoring the Rota-Baxter operators. Then the functor $\sha_{\sigma,\lambda}:\sca\rightarrow\scr_\lambda$ is just the left adjoint functor to $F$ by Theorem \ref{fbcr}, that is, \[\mathrm{Hom}_\sca(A,F(R))\cong\mathrm{Hom}_{\scr_\lambda}(\sha_{\sigma,\lambda}(A),R).\]

Analogously, let $\sca^0$ be the category of braided commutative algebras (not necessarily unital) with symmetric braidings. Then the corresponding forgetful functor $F^0:\scr^0_\lambda\rightarrow\sca^0$ has its left adjoint functor $\sha_{\sigma,\lambda}^0:\sca^0\rightarrow\scr^0_\lambda$ by
Proposition~\ref{fbcr1}, that is,	 \[\mathrm{Hom}_{\sca^0}(A,F^0(R))\cong\mathrm{Hom}_{\scr^0_\lambda}(\sha_{\sigma,\lambda}^0(A),R).\]

Note that $\sca$ is not a full subcategory of $\sca^0$. That is because the homomorphisms in $\sca$ preserve units, whereas those in $\sca^0$ are free of this restriction. Hence, for free objects $\sha_{\sigma,\lambda}(A)$ and $\sha_{\sigma,\lambda}^0(A)$ over a braided commutative unital algebra $(A,\mu,\sigma)$, the second one is larger than the first one.

\section{Quantum shuffle algebras and free braided commutative dendriform algebras}
In this section we introduce the notion of a braided dendriform algebra and show that, under the symmetric condition of the braiding, quantum shuffle algebras gives free braided dendriform algebras.

We first recall the well-known notion of a dendriform algebra of Loday~\cite{Lod1}.
\begin{defn}
A triple $(D,\prec,\succ)$ is
called a \emph{dendriform algebra} if $D$ is a $\bk$-module with two binary operations $\prec,\succ$ satisfying the relations
\begin{align}
&\label{prec}(x\prec y)\prec z=x\prec(y\prec z+y\succ z),\\
&\label{ps}(x\succ y)\prec z= x\succ (y \prec z),\\
&\label{succ}x\succ (y\succ z)=(x\prec y+x\succ y)\succ z,
\end{align}
for $x,y,z\in D$. Moreover, a dendriform algebra $D$ is called \textit{commutative} if $x\prec y=y\prec x$ for any $x,y\in D$, when it is also called a \textit{Zinbiel algebra}.

For two dendriform algebras $(D,\prec,\succ)$ and $(D',\prec',\succ')$, a map $f:D\rightarrow D'$ is called a \textit{homomorphism of dendriform algebras} if
$f$ is a linear homomorphism such that $f\prec=\prec'(f\otimes f)$ and $f\succ=\succ'(f\otimes f)$.
\end{defn}

Next we introduce the braided analogue of dendriform algebras.
\begin{defn}
	A triple $(D,\prec,\succ,\sigma)$ is called a \textit{braided dendriform algebra} if $(D,\sigma)$ is a braided vector space and $(D,\prec,\succ)$ is a dendriform algebra such that
\begin{align}\label{bda1}
\sigma(\id_D\otimes\prec)=(\prec\otimes\id_D)\sigma_2\sigma_1,\,
\sigma(\prec\otimes\id_D)=(\id_D\otimes\prec)\sigma_1\sigma_2,\\
\label{bda2}
\sigma(\id_D\otimes\succ)=(\succ\otimes\id_D)\sigma_2\sigma_1,\,
\sigma(\succ\otimes\id_D)=(\id_D\otimes\succ)\sigma_1\sigma_2.
\end{align}
A braided dendriform algebra $D$ is called \textit{commutative}, if $\prec\sigma=\succ$ and $\succ\sigma=\prec$.
	
For two braided dendriform algebras $(D,\prec,\succ,\sigma)$ and $(D',\prec',\succ',\sigma')$, a map $f:D\rightarrow D'$ is called a \textit{homomorphism of braided dendriform algebras}, if $f$ is a homomorphism of dendriform algebras and $(f\otimes f)\sigma=\sigma'(f\otimes f)$.
\end{defn}
For any braided dendriform algebra $(D,\prec,\succ,\sigma)$, the operator $\star:=\prec+\succ$ makes $(D,\star)$ a braided algebra, giving a splitting of $\star$ in the sense of~\cite{BBGN,Lod}.

Now we generalize \cite{Agu,EF} to the following braided situation.
\begin{prop}\label{sbrd}
Given any strongly braided Rota-Baxter algebra $(R,\mu,P,\sigma)$ of weight $\lambda$, define the operators
\[\prec_P:=\mu\left(\id_R\otimes P+\lambda\id_R^{\otimes 2}\right),\quad \succ_P:=\mu(P\otimes \id_R).\]
Then $(R,\prec_P,\succ_P,\sigma)$ is a braided dendriform algebra. If $R$ is commutative with $\lambda=0$, then $(R,\prec_P,\succ_P,\sigma)$ is even a braided commutative dendriform algebra.

\end{prop}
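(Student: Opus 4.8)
The plan is to treat the two assertions separately: first that $(R,\prec_P,\succ_P)$ is a dendriform algebra, which is a statement purely about the Rota-Baxter structure with the braiding playing no role, and then that $\sigma$ is compatible with $\prec_P$ and $\succ_P$ in the sense of \eqref{bda1}--\eqref{bda2}, where the braided-algebra axioms and the strong condition \eqref{wbr} enter.

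For the dendriform axioms I would write $x\prec_P y=xP(y)+\lambda xy$ and $x\succ_P y=P(x)y$, expand each of \eqref{prec}, \eqref{ps}, \eqref{succ}, and use associativity of $\mu$ to reduce every one of them to the Rota-Baxter identity $P(x)P(y)=P(xP(y))+P(P(x)y)+\lambda P(xy)$. This is precisely the classical computation of \cite{Agu,EF} (weight $0$, resp. general weight), so I would merely recall it; no braiding hypothesis is used here.

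For the compatibility conditions the inputs are the two identities \eqref{ba1} for $(R,\mu,\sigma)$ together with \emph{both} halves of \eqref{wbr}, i.e. $\sigma(P\otimes\id_R)=(\id_R\otimes P)\sigma$ and $\sigma(\id_R\otimes P)=(P\otimes\id_R)\sigma$. I would do each of the four identities by the same mechanism; take $\sigma(\id_R\otimes\succ_P)$ as the model. Writing $\id_R\otimes\succ_P=(\id_R\otimes\mu)(\id_R\otimes P\otimes\id_R)$ on $R^{\otimes 3}$, apply $\sigma(\id_R\otimes\mu)=(\mu\otimes\id_R)\sigma_2\sigma_1$, then push the operator $P$ on the middle factor leftward: past $\sigma_1$ it becomes an operator on the first factor via $\sigma(\id_R\otimes P)=(P\otimes\id_R)\sigma$, and it then commutes with $\sigma_2$ (which acts on the last two factors), yielding $(\mu(P\otimes\id_R)\otimes\id_R)\sigma_2\sigma_1=(\succ_P\otimes\id_R)\sigma_2\sigma_1$. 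Symmetrically, $\sigma(\succ_P\otimes\id_R)=(\id_R\otimes\succ_P)\sigma_1\sigma_2$ uses $\sigma(\mu\otimes\id_R)=(\id_R\otimes\mu)\sigma_1\sigma_2$ and $\sigma(P\otimes\id_R)=(\id_R\otimes P)\sigma$; and for $\prec_P=\mu(\id_R\otimes P)+\lambda\mu$ one runs the same argument on the $\mu(\id_R\otimes P)$ summand and lets \eqref{ba1} dispose of the $\lambda\mu$ summand directly. I expect the only real care needed is the bookkeeping of which $\sigma_i$ the displaced $P$ commutes with and which it must be pushed through via \eqref{wbr}; it is exactly here that one needs $R$ to be \emph{strongly}, not merely weakly, braided, since the four identities invoke both orientations of \eqref{wbr}.

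Finally, in the commutative weight-$0$ case $\prec_P=\mu(\id_R\otimes P)$ and $\succ_P=\mu(P\otimes\id_R)$. From \eqref{wbr}, $(\id_R\otimes P)\sigma=\sigma(P\otimes\id_R)$, so $\prec_P\sigma=\mu(\id_R\otimes P)\sigma=\mu\sigma(P\otimes\id_R)=\mu(P\otimes\id_R)=\succ_P$, where the last step uses braided commutativity $\mu\sigma=\mu$; the other relation $(P\otimes\id_R)\sigma=\sigma(\id_R\otimes P)$ gives $\succ_P\sigma=\mu\sigma(\id_R\otimes P)=\mu(\id_R\otimes P)=\prec_P$. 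Hence $(R,\prec_P,\succ_P,\sigma)$ is braided commutative, which completes the proof.
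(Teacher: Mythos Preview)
Your proposal is correct and follows essentially the same route as the paper: cite \cite{EF} for the dendriform axioms, verify \eqref{bda1}--\eqref{bda2} by combining \eqref{ba1} with both halves of \eqref{wbr} (the paper displays the $\prec_P$ case while you display the $\succ_P$ case, which is immaterial), and in the commutative weight-$0$ case compute $\prec_P\sigma=\succ_P$ exactly as the paper does.
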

\begin{proof}
By~\cite{EF}, we already know that $(R,\prec_P,\succ_P)$ is a dendriform algebra. Now we only need to prove its compatibility with $\sigma$. For condition \eqref{bda1}, we have
\begin{align*}
\begin{split} \sigma&(\id_R\otimes\prec_P)=\sigma(\id_R\otimes\mu)\left(\id_R^{\otimes 2}\otimes P+\lambda\id_R^{\otimes3}\right)
=(\mu\otimes\id_R)\sigma_2\sigma_1\left(\id_R^{\otimes 2}\otimes P+\lambda\id_R^{\otimes3}\right)\\
&=(\mu\otimes\id_R)\left(\id_R\otimes P\otimes\id_R+\lambda\id_R^{\otimes3}\right)\sigma_2\sigma_1=(\prec_P\otimes\id_R)\sigma_2\sigma_1,
\end{split}\\
\begin{split} \sigma&(\prec_P\otimes\id_R)=\sigma(\mu\otimes\id_R)\left(\id_R\otimes P\otimes\id_R+\lambda\id_R^{\otimes3}\right)=(\id_R\otimes\mu)
\sigma_1\sigma_2\left(\id_R\otimes P\otimes\id_R+\lambda\id_R^{\otimes3}\right)\\
&=(\id_R\otimes\mu)\left(\id_R^{\otimes 2}\otimes P+\lambda\id_R^{\otimes3}\right)
\sigma_1\sigma_2=(\id_R\otimes\prec_P)\sigma_1\sigma_2,
\end{split}
\end{align*}
by identities \eqref{ba1} and \eqref{wbr}. The compatibility condition \eqref{bda2} between $\succ_P$ and $\sigma$ is similar to check.

When $R$ is commutative with $\lambda=0$, then $\mu\sigma=\mu$ and thus
\[\prec_P\sigma=\mu(\id_R\otimes P)\sigma
=\mu\sigma(P\otimes\id_R)=\mu(P\otimes\id_R)=\succ_P.\]	
Similarly $\succ_P\sigma=\prec_P$. Hence, $(R,\prec_P,\succ_P,\sigma)$ is a braided commutative dendriform algebra.
\end{proof}

\begin{coro}\label{bshd}
For any braided unital algebra $(A,\mu,\sigma)$ and $\lambda\in\bk$, we have the braided dendriform algebra  $(\sha_\sigma(A),\prec_P,\succ_P,\beta)$ defined by
\[\prec_P:=\diamond_{\sigma,\lambda}\left(\id_{\shap_\sigma(A)}\otimes P_{\shap_\sigma(A)}+\lambda\id_{\shap_\sigma(A)}^{\otimes 2}\right),\quad \succ_P:=\diamond_{\sigma,\lambda}(P_{\shap_\sigma(A)}
\otimes\id_{\shap_\sigma(A)}).\]
Moreover, if $A$ is commutative with $\sigma^2=\id_A^{\otimes 2}$ and $\lambda=0$, then $(\sha_\sigma(A),\prec_P,\succ_P,\beta)$ is also commutative. 	 
\end{coro}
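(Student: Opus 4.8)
The plan is to apply Proposition \ref{sbrd} directly, using Proposition \ref{wrbr} to supply the required strongly braided Rota-Baxter structure. First I would recall from Proposition \ref{wrbr} that, for any braided unital algebra $(A,\mu,\sigma)$, the quadruple $(\sha_\sigma(A),\diamond_{\sigma,\lambda},P_{\shap_\sigma(A)},\beta)$ is a strongly braided unital Rota-Baxter algebra of weight $\lambda$. This is exactly the hypothesis needed to invoke Proposition \ref{sbrd} with $R=\sha_\sigma(A)$, $\mu$ replaced by $\diamond_{\sigma,\lambda}$, $P$ replaced by $P_{\shap_\sigma(A)}$, and $\sigma$ replaced by $\beta$. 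The operators $\prec_P$ and $\succ_P$ produced by that proposition are precisely the ones written in the statement, so the first conclusion — that $(\sha_\sigma(A),\prec_P,\succ_P,\beta)$ is a braided dendriform algebra — follows immediately.

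For the commutativity claim, I would observe that when $A$ is commutative with $\sigma^2=\id_A^{\otimes 2}$, Theorem \ref{fbcr} (more precisely the commutativity computation at the start of its proof, citing \cite[Lemma 14, Theorem 15]{JRZ}) shows that $(\sha_\sigma(A),\diamond_{\sigma,\lambda},\beta)$ is itself a braided commutative algebra, i.e. $\diamond_{\sigma,\lambda}\,\beta=\diamond_{\sigma,\lambda}$, and that $\beta^2=\id_{\shap_\sigma(A)}^{\otimes 2}$. Setting also $\lambda=0$, the hypotheses of the second part of Proposition \ref{sbrd} are met, so $(\sha_\sigma(A),\prec_P,\succ_P,\beta)$ is a braided commutative dendriform algebra, meaning $\prec_P\beta=\succ_P$ and $\succ_P\beta=\prec_P$.

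There is essentially no obstacle here: the corollary is a straightforward specialization obtained by feeding the concrete Rota-Baxter algebra of Proposition \ref{wrbr} into the general machine of Proposition \ref{sbrd}. The only point that requires a moment's care is matching notation — verifying that the abstract $\prec_P=\mu(\id_R\otimes P+\lambda\id_R^{\otimes 2})$ and $\succ_P=\mu(P\otimes\id_R)$ of Proposition \ref{sbrd} become exactly the displayed formulas once one substitutes $\mu\rightsquigarrow\diamond_{\sigma,\lambda}$ and $P\rightsquigarrow P_{\shap_\sigma(A)}$ — and recalling from the proof of Theorem \ref{fbcr} that the commutativity and symmetry of $\beta$ on $\sha_\sigma(A)$ indeed hold under the stated conditions $\sigma^2=\id_A^{\otimes 2}$ and $\lambda=0$.
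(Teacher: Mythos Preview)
Your proposal is correct and matches the paper's own proof essentially line for line: the paper also invokes Proposition~\ref{wrbr} to get the strongly braided Rota-Baxter structure, cites Theorem~\ref{fbcr} for the commutativity of $\sha_\sigma(A)$ when $\sigma^2=\id_A^{\otimes 2}$, and then applies Proposition~\ref{sbrd} (with $\lambda=0$ for the commutative dendriform conclusion).
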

\begin{proof}
By Proposition~\ref{wrbr}, the quadruple $(\sha_\sigma(A),\diamond_{\sigma,\lambda},P_{\shap_\sigma(A)},\beta)$ is a strongly braided Rota-Baxter algebra of weight $\lambda$. It is also commutative if $A$ is commutative with $\sigma^2=\mbox{id}_A^{\otimes 2}$ according to Theorem \ref{fbcr}. Then by Proposition~ \ref{sbrd}, it induces the desired braided dendriform algebra structure $(\sha_\sigma(A),\prec_P,\succ_P,\beta)$, which is moreover commutative under the condition $\lambda=0$.
\end{proof}

For any vector space $V$, equip the tensor space
$T(V):=\bigoplus_{i\geq0}V^{\otimes i}$ with the shuffle product $\shap$ defined recursively by
\begin{align*}
&\mathfrak{u}\,\shap 1_\bk =1_\bk\shap\,\mathfrak{u}=\mathfrak{u},\\
&\mathfrak{u}\,\shap\,\mathfrak{v} =u_1\otimes(\mathfrak{u}'\,\shap\,\mathfrak{v})
+v_1\otimes(\mathfrak{u}\,\shap\,\mathfrak{v}'),
\end{align*}
where $\mathfrak{u}=u_1\otimes\mathfrak{u}'\in V^{\otimes m},\,\mathfrak{v}=v_1\otimes\mathfrak{v}'\in V^{\otimes n}$ for $m,n>0$. Then $(T(V),\shap)$ is the usual \textit{shuffle algebra}~\cite{Kas}. Following Loday~\cite{Lod}, in the subalgebra $T^+(V)=\bigoplus_{i\geq1}V^{\otimes i}$,
one can ``split" $\shap$ into a sum $\shap=\prec_V+\succ_V$ by defining
\begin{align*}
&\mathfrak{u}\prec_V\mathfrak{v}=u_1\otimes(\mathfrak{u}'\,\shap\,\mathfrak{v}),\,
\mathfrak{u}\succ_V\mathfrak{v}=v_1\otimes(\mathfrak{u}\,\shap\,\mathfrak{v}').
\end{align*}

\begin{prop} \cite{Lod}
The triple $(T^+(V),\prec_V,\succ_V)$ is
the \textit{free commutative dendriform algebra} over $V$.
\label{freecda}
\end{prop}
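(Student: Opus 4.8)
The plan is to verify the dendriform axioms and commutativity directly from the recursive definition of the shuffle product, and then establish the universal property by exhibiting the unique extension of a linear map $f\colon V\to D$ (for $D$ a commutative dendriform algebra) to a homomorphism $\bar f\colon T^+(V)\to D$.

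\textbf{Dendriform axioms and commutativity.} First I would check that $\shap$ is associative and commutative on $T(V)$ (this is classical, but I would recall the inductive proof), so that $(T^+(V),\shap)$ is a commutative algebra. Then I would verify the three dendriform relations \eqref{prec}, \eqref{ps}, \eqref{succ} by induction on the total tensor length of the three arguments, using the recursive formulas $\mathfrak{u}\prec_V\mathfrak{v}=u_1\otimes(\mathfrak{u}'\shap\mathfrak{v})$ and $\mathfrak{u}\succ_V\mathfrak{v}=v_1\otimes(\mathfrak{u}\shap\mathfrak{v}')$ together with $\shap=\prec_V+\succ_V$. The key identity making this work is that for $\mathfrak{u}=u_1\otimes\mathfrak{u}'$, $\mathfrak{v}=v_1\otimes\mathfrak{v}'$, $\mathfrak{w}=w_1\otimes\mathfrak{w}'$ one has $(\mathfrak{u}\shap\mathfrak{v})\shap\mathfrak{w}=u_1\otimes(\mathfrak{u}'\shap\mathfrak{v}\shap\mathfrak{w})+v_1\otimes(\mathfrak{u}\shap\mathfrak{v}'\shap\mathfrak{w})+w_1\otimes(\mathfrak{u}\shap\mathfrak{v}\shap\mathfrak{w}')$, which decomposes the triple shuffle by first letter; matching first-letter contributions on both sides of each axiom closes the induction. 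Commutativity of the Zinbiel structure, $\mathfrak{u}\prec_V\mathfrak{v}=\mathfrak{v}\prec_V\mathfrak{u}$, is immediate: both equal... wait, they are not equal; rather the Zinbiel condition is $x\prec y=y\succ x$ in some conventions — here I would use the convention in the paper's definition, $x\prec y=y\prec x$, which fails literally, so instead I would note that the correct reading is the standard Zinbiel identity and that $(T^+(V),\prec_V)$ with the single operation $\prec_V$ satisfies $x\prec_V(y\prec_V z)=x\prec_V(z\prec_V y)$ and is the free Zinbiel algebra; in any case $\prec_V+\succ_V$ is commutative, which is what matters.

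\textbf{Universal property.} Given a commutative dendriform (Zinbiel) algebra $D$ and a linear map $f\colon V\to D$, I would define $\bar f$ on $V^{\otimes n}$ by $\bar f(v_1\otimes\cdots\otimes v_n):=f(v_1)\prec\big(f(v_2)\prec(\cdots\prec f(v_n))\big)$, the "left-combed" product, with $\bar f(v_1)=f(v_1)$ on $V^{\otimes 1}$. I would then show by induction on lengths that $\bar f(\mathfrak{u}\prec_V\mathfrak{v})=\bar f(\mathfrak{u})\prec\bar f(\mathfrak{v})$ and $\bar f(\mathfrak{u}\succ_V\mathfrak{v})=\bar f(\mathfrak{u})\succ\bar f(\mathfrak{v})$; the $\prec$ case follows because $\mathfrak{u}\prec_V\mathfrak{v}=u_1\otimes(\mathfrak{u}'\shap\mathfrak{v})$ forces $\bar f(\mathfrak{u}\prec_V\mathfrak{v})=f(u_1)\prec\bar f(\mathfrak{u}'\shap\mathfrak{v})=f(u_1)\prec\big(\bar f(\mathfrak{u}')\prec\bar f(\mathfrak{v})+\bar f(\mathfrak{u}')\succ\bar f(\mathfrak{v})\big)$, and then axiom \eqref{prec} rewrites this as $\big(f(u_1)\prec\bar f(\mathfrak{u}')\big)\prec\bar f(\mathfrak{v})=\bar f(\mathfrak{u})\prec\bar f(\mathfrak{v})$; the $\succ$ case uses that in a commutative Zinbiel algebra $\succ$ is determined by $\prec$ via the symmetry $a\succ b=b\prec a$, reducing it to the $\prec$ case. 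For uniqueness, any dendriform homomorphism $g$ extending $f$ must satisfy $g(v_1\otimes\cdots\otimes v_n)=g(v_1\prec_V(v_2\otimes\cdots\otimes v_n))=f(v_1)\prec g(v_2\otimes\cdots\otimes v_n)$ since $v_1\otimes\cdots\otimes v_n=v_1\prec_V(v_2\otimes\cdots\otimes v_n)$ in $T^+(V)$, so $g=\bar f$ by induction.

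\textbf{Main obstacle.} The delicate point is the bookkeeping in the induction for the dendriform axioms and for multiplicativity of $\bar f$: one must carefully track which argument contributes the leading tensor factor and invoke exactly the right one of \eqref{prec}–\eqref{succ} at each step, and handle the commutativity/symmetry relating $\succ$ to $\prec$ cleanly so that the $\succ$-cases are not redundant re-derivations. Since this is Loday's theorem cited verbatim from \cite{Lod}, I expect the paper simply to refer to that source rather than reproduce the argument; accordingly the "proof" here is essentially a citation, and the real content is that this classical statement is the $\sigma=\tau$ specialization of the braided result proved later in the section.
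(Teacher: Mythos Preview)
Your expectation is exactly right: the paper does not prove this proposition at all but simply cites Loday \cite{Lod}. Your outlined argument---defining $\bar f$ by the left-combed product $f(v_1)\prec(f(v_2)\prec\cdots)$ and proving multiplicativity by induction using axiom \eqref{prec}---is precisely the $\sigma=\tau$ specialization of the proof the paper gives for the braided generalization in Theorem~\ref{fbcd}; compare your formula for $\bar f$ with the paper's definition of $\bar\psi$ in \eqref{mul}, and your inductive step for $\bar f(\mathfrak u\prec_V\mathfrak v)$ with the closing paragraph of that proof.

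One clarification on the point where you hesitated: the paper's stated classical definition ``$x\prec y=y\prec x$'' is indeed not satisfied by $(T^+(V),\prec_V)$ and appears to be a slip. The braided definition given immediately afterwards, $\prec\sigma=\succ$, specializes under the usual flip to $y\prec x=x\succ y$, which is the standard Zinbiel symmetry and is exactly what $\mathfrak u\succ_V\mathfrak v=\mathfrak v\prec_V\mathfrak u$ expresses in the shuffle algebra. So your instinct to read commutativity as $x\prec y=y\succ x$ is the correct one, and with that reading your handling of the $\succ$-case (reducing it to the $\prec$-case via the symmetry) is fine.
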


For any braided vector space $(V,\sigma)$, we have the \textit{quantum shuffle algebra} $(T_\sigma(V),\shap_\sigma,\beta)$ introduced in \cite{Ros}.  Here $T_\sigma(V)$ has $T(V)$ as its underlying space, but with \textit{quantum shuffle product} $\shap_\sigma$
defined recursively as a sum $\shap_\sigma=\sum_{i,j\geq 0} \shap_{\sigma(i,j)}$ with $\shap_{\sigma(i,j)}$ sending
$V^{\otimes i}\underline{\otimes}V^{\otimes j}$ to $V^{\otimes(i+j)}$. It is just the quantum quasi-shuffle product $*_{\sigma(i,j)}$ in \eqref{eq:qqs} when $\lambda=0$. More precisely, for $m, n>0$ and $\mathfrak{u}\in V^{\otimes m},\,\mathfrak{v}\in V^{\otimes n}$, we have

\begin{align*}
&\mathfrak{u}\,\shap_{\sigma(m,0)} 1_\bk=1_\bk\shap_{\sigma(0,m)}\,\mathfrak{u}=\mathfrak{u},\\
&\mathfrak{u}\,\shap_{\sigma(m,n)}\,\mathfrak{v} :=\left(\id_V\otimes\shap_{\sigma(m-1,n)} +(\id_V\otimes\shap_{\sigma(m,n-1)})\left(\beta_{m,1}\otimes\id_V^{\otimes(n-1)}\right)\right)(\mathfrak{u}\otimes\mathfrak{v}).
\end{align*}

The quantum shuffle algebra $T_\sigma(V)$ also has a braided subalgebra on the subspace $T^+_\sigma(V)=\bigoplus_{i\geq1}V^{\otimes i}$ of $T_\sigma(V)$.
There we can split $\shap_\sigma$ into a sum $\shap_\sigma=\prec_\sigma+\succ_\sigma$ by recursively defining
\begin{align}
&\label{bd1}\mathfrak{u}\prec_\sigma\mathfrak{v} :=(\id_V\otimes\shap_{\sigma(m-1,n)})(\mathfrak{u}\otimes\mathfrak{v}),\ \mathfrak{u}\succ_\sigma\mathfrak{v} :=(\id_V\otimes\shap_{\sigma(m,n-1)})\left(\beta_{m,1}\otimes\id_V^{\otimes(n-1)}\right)(\mathfrak{u}\otimes\mathfrak{v}),
\end{align}
for $\mathfrak{u}\in V^{\otimes m},\,\mathfrak{v}\in V^{\otimes n},\,m,n>0$. Then we obtain

\begin{theorem}\label{bcd}
The quadruple $(T^+_\sigma(V),\prec_\sigma,\succ_\sigma,\beta)$ is a braided dendriform algebra, which is commutative when $\sigma^2=\id_V^{\otimes2}$.
\end{theorem}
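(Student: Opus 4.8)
The plan is to establish the three assertions separately: the dendriform axioms \eqref{prec}--\eqref{succ}, the braiding compatibilities \eqref{bda1}--\eqref{bda2}, and the commutativity when $\sigma^2=\id_V^{\otimes 2}$. Throughout, the two inputs I would lean on are the associativity of the quantum shuffle product $\shap_\sigma$ and the fact that $(T_\sigma(V),\shap_\sigma,\beta)$ is a braided algebra, i.e. satisfies \eqref{ba1} (both from Rosso~\cite{Ros}), together with the braid identities \eqref{beta}. It is useful to record first, by unwinding \eqref{bd1} and using that $\shap_{\sigma(i,j)}$ is the restriction of $\shap_\sigma$ to $V^{\otimes i}\underline{\otimes}V^{\otimes j}$, that for homogeneous $\mathfrak{u}=u_1\otimes\mathfrak{u}'\in V^{\otimes m}$ and $\mathfrak{v}=v_1\otimes\mathfrak{v}'\in V^{\otimes n}$ with $m,n\geq1$,
\[\mathfrak{u}\prec_\sigma\mathfrak{v}=u_1\otimes(\mathfrak{u}'\shap_\sigma\mathfrak{v}),\qquad \mathfrak{u}\succ_\sigma\mathfrak{v}=(\id_V\otimes\shap_\sigma)(\beta_{m,1}\otimes\id_V^{\otimes(n-1)})(\mathfrak{u}\otimes\mathfrak{v}),\]
and of course $\prec_\sigma+\succ_\sigma=\shap_\sigma$ on $T^+_\sigma(V)$. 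So $\prec_\sigma$ keeps the leading letter of its left argument in front, while $\succ_\sigma$ braids the leading letter of its right argument past the whole left argument (via $\beta_{m,1}$) and then puts it in front.

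For the dendriform axioms I would use this ``leading-letter'' bookkeeping. Axiom \eqref{prec}: since $\mathfrak{y}\prec_\sigma\mathfrak{z}+\mathfrak{y}\succ_\sigma\mathfrak{z}=\mathfrak{y}\shap_\sigma\mathfrak{z}$, both sides of \eqref{prec} equal, after expanding, $x_1\otimes\big((\mathfrak{x}'\shap_\sigma\mathfrak{y})\shap_\sigma\mathfrak{z}\big)$ and $x_1\otimes\big(\mathfrak{x}'\shap_\sigma(\mathfrak{y}\shap_\sigma\mathfrak{z})\big)$ respectively, which coincide by associativity of $\shap_\sigma$. Axiom \eqref{ps}: abbreviating $\beta_{m,1}(\mathfrak{x}\otimes y_1)$ schematically by $\widetilde{y_1}\otimes\widetilde{\mathfrak{x}}$, both $(\mathfrak{x}\succ_\sigma\mathfrak{y})\prec_\sigma\mathfrak{z}$ and $\mathfrak{x}\succ_\sigma(\mathfrak{y}\prec_\sigma\mathfrak{z})$ reduce to $\widetilde{y_1}\otimes\big((\widetilde{\mathfrak{x}}\shap_\sigma\mathfrak{y}')\shap_\sigma\mathfrak{z}\big)$ and $\widetilde{y_1}\otimes\big(\widetilde{\mathfrak{x}}\shap_\sigma(\mathfrak{y}'\shap_\sigma\mathfrak{z})\big)$, again equal by associativity. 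Axiom \eqref{succ} is the subtle one: in $\mathfrak{x}\succ_\sigma(\mathfrak{y}\succ_\sigma\mathfrak{z})$ the leading letter of $\mathfrak{z}$ is braided first past $\mathfrak{y}$ and then past $\mathfrak{x}$, whereas in $(\mathfrak{x}\shap_\sigma\mathfrak{y})\succ_\sigma\mathfrak{z}=(\mathfrak{x}\prec_\sigma\mathfrak{y}+\mathfrak{x}\succ_\sigma\mathfrak{y})\succ_\sigma\mathfrak{z}$ it is braided past the single word $\mathfrak{x}\shap_\sigma\mathfrak{y}$; these coincide because of the first identity in \eqref{beta} with $k=1$, $\beta_{m+n,1}=(\beta_{m,1}\otimes\id_V^{\otimes n})(\id_V^{\otimes m}\otimes\beta_{n,1})$, together with \eqref{ba1} for $\shap_\sigma$ (which lets $\beta$ be pushed through the quantum shuffle of the remaining letters), and one more application of associativity.

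For the braiding compatibilities, I would note that \eqref{bda1}--\eqref{bda2} are, formally, just the two braided-algebra identities \eqref{ba1} with $\mu$ replaced by $\prec_\sigma$, respectively $\succ_\sigma$. Since $\shap_\sigma=\prec_\sigma+\succ_\sigma$ already obeys \eqref{ba1}, it suffices to check that $\beta$ preserves the splitting of $\shap_\sigma$ according to the origin of the leading output letter. Concretely, to prove $\beta(\prec_\sigma\otimes\id_D)=(\id_D\otimes\prec_\sigma)\beta_1\beta_2$ on $V^{\otimes m}\otimes V^{\otimes n}\otimes V^{\otimes k}$, I would peel off the leading letter $u_1$ of $\mathfrak{u}\prec_\sigma\mathfrak{v}=u_1\otimes(\mathfrak{u}'\shap_\sigma\mathfrak{v})$ using $\beta_{m+n,k}=(\beta_{1,k}\otimes\id_V^{\otimes(m+n-1)})(\id_V\otimes\beta_{m+n-1,k})$ from \eqref{beta}, apply \eqref{ba1} (in graded form) to $\beta_{m+n-1,k}$ acting on $(\mathfrak{u}'\shap_\sigma\mathfrak{v})\otimes\mathfrak{w}$, and then recognise the output as $(\id_D\otimes\prec_\sigma)$ applied to the braided rearrangement of $\mathfrak{u}\otimes\mathfrak{v}\otimes\mathfrak{w}$ prescribed by $\beta_1\beta_2=\beta_{m+n,k}$. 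The remaining three identities are handled the same way, the $\succ_\sigma$ cases also using $\beta_{m,1}$ to expose the leading letter of the right argument. It is cleanest to isolate first the graded form of \eqref{ba1}, namely $\beta_{p+q,r}(\shap_{\sigma(p,q)}\otimes\id_V^{\otimes r})=(\id_V^{\otimes r}\otimes\shap_{\sigma(p,q)})(\beta_{p,r}\otimes\id_V^{\otimes q})(\id_V^{\otimes p}\otimes\beta_{q,r})$ and its mirror, and then feed it mechanically into all these computations.

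Finally, commutativity when $\sigma^2=\id_V^{\otimes 2}$: as recalled in the proof of Theorem~\ref{fbcr}, one then has $\shap_\sigma\beta=\shap_\sigma$ (\cite[Lemma 14, Theorem 15]{JRZ}). Hence it is enough to prove $\prec_\sigma\beta=\succ_\sigma$, since then $\succ_\sigma\beta=(\shap_\sigma-\prec_\sigma)\beta=\shap_\sigma-\succ_\sigma=\prec_\sigma$. For $\prec_\sigma\beta=\succ_\sigma$, I would use the second identity in \eqref{beta}, $\beta_{m,n}=(\id_V\otimes\beta_{m,n-1})(\beta_{m,1}\otimes\id_V^{\otimes(n-1)})$, to write $\beta_{m,n}(\mathfrak{u}\otimes\mathfrak{v})=(\id_V\otimes\beta_{m,n-1})\big(\beta_{m,1}(\mathfrak{u}\otimes v_1)\otimes\mathfrak{v}'\big)$; applying $\prec_\sigma$ extracts the braided image of $v_1$ as the leading letter and leaves behind $\shap_\sigma$ applied to $\beta_{m,n-1}(\widetilde{\mathfrak{u}}\otimes\mathfrak{v}')$, which by $\shap_\sigma\beta=\shap_\sigma$ equals $\shap_{\sigma(m,n-1)}(\widetilde{\mathfrak{u}}\otimes\mathfrak{v}')$ --- precisely the defining expression for $\mathfrak{u}\succ_\sigma\mathfrak{v}$. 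I expect the main obstacle to be axiom \eqref{succ} and the $\succ_\sigma$-half of \eqref{bda2}: both require moving a braid strand past a quantum-shuffled word, which is exactly the content of combining \eqref{beta} with the graded form of \eqref{ba1}, and some care with the bookkeeping of indices is needed.
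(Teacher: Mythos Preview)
Your argument is correct, and for the dendriform axioms and for the commutativity when $\sigma^2=\id_V^{\otimes 2}$ it essentially matches the paper (the paper simply defers the dendriform axioms to \cite[Theorem~2.8]{Jian1}, and for commutativity uses the same decomposition $\beta_{m,n}=(\id_V\otimes\beta_{m,n-1})(\beta_{m,1}\otimes\id_V^{\otimes(n-1)})$ together with $\shap_{\sigma(i,j)}=\shap_{\sigma(j,i)}\beta_{ij}$ that you use).

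Where you genuinely diverge is in the braiding compatibilities \eqref{bda1}--\eqref{bda2}. The paper does not use the recursive leading-letter description at all here: it invokes the closed-form shuffle-sum formula $\prec_\sigma|_{V^{\otimes m}\otimes V^{\otimes n}}=\id_V\otimes\sum_{w\in\mathfrak{S}_{m-1,n}}T_\sigma^w$ and then commutes $T_\sigma^{\chi_{m+n,l}}$ (resp.\ $T_\sigma^{\chi_{m,n+l}}$) past this sum purely at the level of braid-group elements, using that $\chi_{i+j,k}$ factors via \eqref{beta}. Your route instead peels off the leading letter, appeals to the graded form of \eqref{ba1} for $\shap_\sigma$ (i.e.\ that $(T_\sigma(V),\shap_\sigma,\beta)$ is already a braided algebra, from \cite{Ros}), and reassembles. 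Your method is more modular and avoids the explicit permutation combinatorics, but it imports the braided-algebra property of $\shap_\sigma$ as a black box; the paper's method is self-contained at the permutation level and in fact reproves the relevant instance of \eqref{ba1} along the way. Either approach is fine; yours is arguably cleaner once you have isolated the graded identity $\beta_{p+q,r}(\shap_{\sigma(p,q)}\otimes\id)=(\id\otimes\shap_{\sigma(p,q)})(\beta_{p,r}\otimes\id)(\id\otimes\beta_{q,r})$ as you suggest.
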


\begin{proof}
First it is easy to check that $(T^+_\sigma(V),\prec_\sigma,\succ_\sigma)$
remains a dendriform algebra by the associativity of quantum shuffle product $\shap_\sigma$ as shown in~\cite[Theorem~2.8]{Jian1}.
So we only need to check the compatibility conditions \eqref{bda1}, \eqref{bda2}. For $\mathfrak{u}\in V^{\otimes m},\mathfrak{v}\in V^{\otimes n},\mathfrak{w}\in V^{\otimes l}$, if at least one of $m,n,l$ is $0$, the conditions are easy to check. Otherwise, suppose that $m,n,l>0$, then
\begin{align*}
\beta(\prec_\sigma&\otimes\id_{T_\sigma(V)})(\mathfrak{u}\otimes\mathfrak{v}\otimes\mathfrak{w})=\beta_{m+n,l}((\mathfrak{u}\prec_\sigma\mathfrak{v})\otimes\mathfrak{w})\\
&=T_\sigma^{\chi_{m+n,l}}\left(\id_V\otimes\sum_{w\in\mathfrak{S}_{m-1,n}}T_\sigma^w\otimes\id_V^{\otimes l}\right)(\mathfrak{u}\otimes\mathfrak{v}\otimes\mathfrak{w})\\
&=\left(\id_V^{\otimes(l+1)}\otimes\sum_{w\in\mathfrak{S}_{m-1,n}}T_\sigma^w\right)T_\sigma^{\chi_{m+n,l}}(\mathfrak{u}\otimes\mathfrak{v}\otimes\mathfrak{w})\\
&=\left(\id_V^{\otimes(l+1)}\otimes\sum_{w\in\mathfrak{S}_{m-1,n}}T_\sigma^w\right)(T_\sigma^{\chi_{m,l}}\otimes\id_V^{\otimes n})(\id_V^{\otimes m}\otimes T_\sigma^{\chi_{n,l}})(\mathfrak{u}\otimes\mathfrak{v}\otimes\mathfrak{w})\\
&=(\id_{T_\sigma(V)}\otimes\prec_\sigma)(\beta\otimes\id_{T_\sigma(V)})(\id_{T_\sigma(V)}\otimes\beta)(\mathfrak{u}\otimes\mathfrak{v}\otimes\mathfrak{w})
\end{align*}
by \eqref{beta}. On the other hand,
\begin{align*}
\beta(\id_{T_\sigma(V)}&\otimes\prec_\sigma)(\mathfrak{u}\otimes\mathfrak{v}\otimes\mathfrak{w})=\beta_{m,n+l}(\mathfrak{u}\otimes(\mathfrak{v}\prec_\sigma\mathfrak{w}))\\
&=T_\sigma^{\chi_{m,n+l}}\left(\id_V^{\otimes (m+1)}\otimes\sum_{w\in\mathfrak{S}_{n-1,l}}T_\sigma^w\right)(\mathfrak{u}\otimes\mathfrak{v}\otimes\mathfrak{w})\\
&=\left(\id_V\otimes\sum_{w\in\mathfrak{S}_{n-1,l}}T_\sigma^w\otimes\id_V^{\otimes m}\right)T_\sigma^{\chi_{m,n+l}}(\mathfrak{u}\otimes\mathfrak{v}\otimes\mathfrak{w})\\
&=\left(\id_V\otimes\sum_{w\in\mathfrak{S}_{n-1,l}}T_\sigma^w\otimes\id_V^{\otimes m}\right)(\id_V^{\otimes n}\otimes T_\sigma^{\chi_{m,l}})(T_\sigma^{\chi_{m,n}}\otimes\id_V^{\otimes l})(\mathfrak{u}\otimes\mathfrak{v}\otimes\mathfrak{w})\\
&=(\prec_\sigma\otimes\id_{T_\sigma(V)})(\id_{T_\sigma(V)}\otimes\beta)(\beta\otimes\id_{T_\sigma(V)})(\mathfrak{u}\otimes\mathfrak{v}\otimes\mathfrak{w}).
\end{align*}
This proves condition \eqref{bda1}. The verification of condition \eqref{bda2} is similar.

To finish the proof, we show that $T^+_\sigma(V)$ is commutative as a dendriform algebra when $\sigma^2=\id_V^{\otimes2}$.
Actually in this case $(T^+_\sigma(V),\shap_\sigma,\beta)$ is a braided commutative algebra and $\beta^2=\id_{T^+_\sigma(V)}^{\otimes2}$ by \cite[Lemma 14, Theorem 15]{JRZ}. For $\mathfrak{u}\in V^{\otimes m},\mathfrak{v}\in V^{\otimes n},\,m,n>0$, we have
\begin{align*}
\begin{split}
\prec_\sigma\beta&(\mathfrak{u}\otimes\mathfrak{v})
=(\id_V\otimes\shap_{\sigma(n-1,m)})\beta_{mn}
(\mathfrak{u}\otimes\mathfrak{v})\\
&=(\id_V\otimes\shap_{\sigma(m,n-1)})(\id_V\otimes\beta_{n-1,m})(\id_V\otimes\beta_{m,n-1})(\beta_{m,1}\otimes\id_V^{\otimes (n-1)})(\mathfrak{u}\otimes\mathfrak{v})\\
&=(\id_V\otimes\shap_{\sigma(m,n-1)})(\beta_{m,1}\otimes\id_V^{\otimes (n-1)})(\mathfrak{u}\otimes\mathfrak{v})=\mathfrak{u}\succ_\sigma\mathfrak{v},
\end{split}\\
\begin{split}
\succ_\sigma\beta&(\mathfrak{u}\otimes\mathfrak{v})
=(\id_V\otimes\shap_{\sigma(n,m-1)})
(\beta_{n,1}\otimes\id_V^{\otimes(m-1)})
\beta_{mn}(\mathfrak{u}\otimes\mathfrak{v})\\
&=(\id_V\otimes\shap_{\sigma(m-1,n)})(\id_V\otimes\beta_{n,m-1})
(\beta_{n,1}\otimes\id_V^{\otimes(m-1)})
\beta_{mn}(\mathfrak{u}\otimes\mathfrak{v})\\
&=(\id_V\otimes\shap_{\sigma(m-1,n)})\beta_{nm}\beta_{mn}(\mathfrak{u}\otimes\mathfrak{v})
=\mathfrak{u}\prec_\sigma\mathfrak{v},
\end{split}
\end{align*}
since $\shap_{\sigma(i,j)}=\shap_{\sigma(j,i)}\beta_{ij}$ and $\beta_{ij}=(\id_V\otimes\beta_{i,j-1})(\beta_{i,1}\otimes\id_V^{\otimes(j-1)})$.
\end{proof}

Let $\bcd$ be the category of braided commutative dendriform algebras with their homomorphisms of braided dendriform algebras as the morphisms. Consider the natural inclusion
\[j_V:V\rightarrow T^+_\sigma(V),\,v\mapsto v,\]
 then we have the following theorem as a braided generalization of Proposition~\ref{freecda}.
\begin{theorem}\label{fbcd}
	For any braided space $(V,\sigma)$ with $\sigma^2=\id_V^{\otimes 2}$, the quadruple $(T^+_\sigma(V),\prec_\sigma,\succ_\sigma,\beta)$ is the free object in $\bcd$. More precisely, for any $(D,\prec,\succ,\tau)$ in $\bcd$ with homomorphism $\psi:V\rightarrow D$ of braided vector spaces, there exists a unique morphism $\bar{\psi}:T^+_\sigma(V)\rightarrow D$ in $\bcd$ such that the following commutative diagram holds:
	\[\xymatrix@=2em{V\ar@{->}[d]_-{\psi}\ar@{->}[r]^-{j_V}&	 {T^+_\sigma(V)}\ar@{.>}[dl]^-{\bar{\psi}}\\
	D&}\]
\end{theorem}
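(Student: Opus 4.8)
The plan is to mimic the argument for Theorem~\ref{fbcr}: build $\bar\psi$ by a forced recursion, observe that this makes it unique, and then verify it is a morphism in $\bcd$ by induction on tensor degree. First, Theorem~\ref{bcd} already gives that $(T^+_\sigma(V),\prec_\sigma,\succ_\sigma,\beta)$ is an object of $\bcd$, the hypothesis $\sigma^2=\id_V^{\otimes 2}$ being exactly what forces commutativity. Since $\shap_{\sigma(0,n)}$ is the identity map, \eqref{bd1} yields $v\prec_\sigma\mathfrak{w}=v\otimes\mathfrak{w}$ for $v\in V$, hence $v_1\otimes\cdots\otimes v_m=v_1\prec_\sigma(v_2\prec_\sigma(\cdots\prec_\sigma v_m)\cdots)$; thus $j_V(V)$ generates $T^+_\sigma(V)$ as a dendriform algebra, and any $\bcd$-morphism $\bar\psi$ with $\bar\psi j_V=\psi$ is forced to satisfy $\bar\psi(v)=\psi(v)$ for $v\in V$ and $\bar\psi(v_1\otimes\mathfrak{a}')=\psi(v_1)\prec\bar\psi(\mathfrak{a}')$ for $\mathfrak{a}'$ of positive degree (using $v_1\otimes\mathfrak{a}'=v_1\prec_\sigma\mathfrak{a}'$). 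I take this recursion as the definition of $\bar\psi$; uniqueness is then immediate, and only the homomorphism properties remain.

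Next I would prove the braiding compatibility $(\bar\psi\otimes\bar\psi)\beta=\tau(\bar\psi\otimes\bar\psi)$ on $V^{\otimes m}\underline{\otimes}V^{\otimes n}$ by induction on $m+n$. The base case $m=n=1$ is just $(\psi\otimes\psi)\sigma=\tau(\psi\otimes\psi)$, since $\beta_{11}=\sigma$ and $\bar\psi|_V=\psi$. For the inductive step write $\mathfrak{a}=v_1\otimes\mathfrak{a}'$, so $\bar\psi(\mathfrak{a})=\psi(v_1)\prec\bar\psi(\mathfrak{a}')$ (or $\psi(v_1)$ if $m=1$); then $\tau(\bar\psi(\mathfrak{a})\otimes\bar\psi(\mathfrak{b}))=\tau(\prec\otimes\id_D)(\psi(v_1)\otimes\bar\psi(\mathfrak{a}')\otimes\bar\psi(\mathfrak{b}))$, and the second identity of \eqref{bda1} rewrites this as $(\id_D\otimes\prec)\tau_1\tau_2(\cdots)$; after pushing the braidings $\tau_2,\tau_1$ down through the factorization $\beta_{m,n}=(\beta_{1,n}\otimes\id_V^{\otimes(m-1)})(\id_V\otimes\beta_{m-1,n})$ from \eqref{beta} and applying the induction hypothesis to the pieces $(\mathfrak{a}',\mathfrak{b})$ and $(v_1,\cdot)$ of strictly smaller total degree, one recovers $(\bar\psi\otimes\bar\psi)\beta_{m,n}(\mathfrak{a}\otimes\mathfrak{b})$; the case $m=1$ is the same computation with the outer $\prec$ absent. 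I expect this bookkeeping with the lifted braid operators and the braid relation \eqref{eq:qybe} to be the most laborious part of the proof, although it is formally routine and entirely parallel to the verifications already carried out in Theorem~\ref{bcd}.

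Finally I would show $\bar\psi(\mathfrak{a}\prec_\sigma\mathfrak{b})=\bar\psi(\mathfrak{a})\prec\bar\psi(\mathfrak{b})$ and $\bar\psi(\mathfrak{a}\succ_\sigma\mathfrak{b})=\bar\psi(\mathfrak{a})\succ\bar\psi(\mathfrak{b})$ by a single induction on $m+n$, and the one genuinely structural point is that at each level the $\prec$-identities must be settled before the $\succ$-identities. For $\prec$ with $m\geq 2$: $\mathfrak{a}\prec_\sigma\mathfrak{b}=v_1\otimes(\mathfrak{a}'\shap_\sigma\mathfrak{b})=v_1\otimes(\mathfrak{a}'\prec_\sigma\mathfrak{b}+\mathfrak{a}'\succ_\sigma\mathfrak{b})$ by \eqref{bd1}, so $\bar\psi(\mathfrak{a}\prec_\sigma\mathfrak{b})=\psi(v_1)\prec\bar\psi(\mathfrak{a}'\shap_\sigma\mathfrak{b})$; the induction hypothesis at level $m+n-1$ gives $\bar\psi(\mathfrak{a}'\shap_\sigma\mathfrak{b})=\bar\psi(\mathfrak{a}')\prec\bar\psi(\mathfrak{b})+\bar\psi(\mathfrak{a}')\succ\bar\psi(\mathfrak{b})$, and then the dendriform relation \eqref{prec} in $D$ collapses $\psi(v_1)\prec\big(\bar\psi(\mathfrak{a}')\prec\bar\psi(\mathfrak{b})+\bar\psi(\mathfrak{a}')\succ\bar\psi(\mathfrak{b})\big)$ to $(\psi(v_1)\prec\bar\psi(\mathfrak{a}'))\prec\bar\psi(\mathfrak{b})=\bar\psi(\mathfrak{a})\prec\bar\psi(\mathfrak{b})$; the case $m=1$ is the defining recursion. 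For $\succ$ at level $m+n$ I would use the commutativity of $T^+_\sigma(V)$, namely $\mathfrak{a}\succ_\sigma\mathfrak{b}=\prec_\sigma\beta(\mathfrak{a}\otimes\mathfrak{b})$: applying term by term the $\prec$-identities at level $m+n$, which are now available, gives $\bar\psi(\mathfrak{a}\succ_\sigma\mathfrak{b})=\prec(\bar\psi\otimes\bar\psi)\beta(\mathfrak{a}\otimes\mathfrak{b})=\prec\tau(\bar\psi\otimes\bar\psi)(\mathfrak{a}\otimes\mathfrak{b})=\succ(\bar\psi\otimes\bar\psi)(\mathfrak{a}\otimes\mathfrak{b})$, using the braiding compatibility established above and the relation $\prec\tau=\succ$ in $D$. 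Combined with the braiding compatibility, this shows $\bar\psi$ is a morphism in $\bcd$, which finishes the proof.
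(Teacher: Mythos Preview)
Your proposal is correct and follows essentially the same strategy as the paper: define $\bar\psi$ by the forced recursion $\bar\psi(v_1\otimes\mathfrak{a}')=\psi(v_1)\prec\bar\psi(\mathfrak{a}')$ coming from $v\prec_\sigma\mathfrak{w}=v\otimes\mathfrak{w}$, establish braiding compatibility by induction on total degree using \eqref{bda1}, then prove the $\prec$-homomorphism property via \eqref{prec} and deduce the $\succ$-property from commutativity $\succ_\sigma=\prec_\sigma\beta$ and $\succ=\prec\tau$. One small remark: your phrase ``the case $m=1$ is the same computation with the outer $\prec$ absent'' is not quite right, since for $m=1$ there is nothing left of $\mathfrak{a}$ to decompose and you must instead split $\mathfrak{b}=w_1\otimes\mathfrak{b}'$ and use the first identity in \eqref{bda1}; the paper avoids this asymmetry by decomposing both tensor factors simultaneously in the main inductive step, at the cost of a longer chain of equalities.
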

\begin{proof}
By Theorem \ref{bcd}, $T^+_\sigma(V)$ is a braided commutative dendriform algebra when $\sigma^2=\id_A^{\otimes 2}$. It remains to verify its universal property as stated in the theorem.

Let $(D,\prec,\succ,\tau)$ be a braided commutative dendriform algebra and $\psi:V\rightarrow D$ be a homomorphism of braided vector spaces.
First note that for $\mathfrak{u}=u_1\otimes\mathfrak{u}'=u_1\otimes\cdots\otimes u_m\in V^{\otimes m}$,
we can rewrite
\begin{equation}\label{uq}
\mathfrak{u}=u_1\prec_\sigma(u_2\prec_\sigma\cdots) =u_1\prec_\sigma\mathfrak{u}'=u_1\succ_\sigma^\beta\mathfrak{u}' =\,\succ_{\sigma(m,1)}\beta_{1,m}(u_1\otimes\mathfrak{u}'),
\end{equation}
where the third equality is due to the commutativity of $T_\sigma^+(V)$, making $\succ_\sigma^\beta:=\,\succ_\sigma\beta=\prec_\sigma$. Therefore, in order to define the desired homomorphism $\bar{\psi}:T_\sigma^+(V)\rightarrow D$ of braided dendriform algebras, we must have
\begin{equation}\label{mul}
\bar{\psi}(\mathfrak{u})=\bar{\psi}(u_1\prec_\sigma\mathfrak{u}')
=\bar{\psi}(u_1)\prec\bar{\psi}(\mathfrak{u}')
=\psi(u_1)\prec\bar{\psi}(\mathfrak{u}')
=\cdots=\psi(u_1)\prec(\psi(u_2)\prec\cdots).
\end{equation}
This gives our definition of $\bar{\psi}$, and also shows its uniqueness.

Now we show that $\bar{\psi}$ is indeed a homomorphism of braided dendriform algebras. We first check $$(\bar{\psi}\otimes\bar{\psi})\beta(\mathfrak{u}\otimes\mathfrak{v})
=\tau(\bar{\psi}\otimes\bar{\psi})(\mathfrak{u}\otimes\mathfrak{v})$$ for $\mathfrak{u}\in V^{\otimes(m+1)},\,\mathfrak{v}\in V^{\otimes(n+1)},\,m,n\geq0$ by induction on $m+n$. When $m=n=0$, it is the commutativity between $\psi$ and the braidings. When exactly one of $m,n$ is 0, we use \eqref{beta}, \eqref{bda1} and \eqref{mul} to see that
\begin{align*}
(\bar{\psi}&\otimes\bar{\psi})\beta(u\otimes\mathfrak{v})
=(\bar{\psi}\otimes\psi)\beta_{1,n+1}(u\otimes\mathfrak{v})
=(\prec(\psi\otimes\bar{\psi})\otimes\psi)
\beta_{1,n+1}(u\otimes\mathfrak{v})\\
&=(\prec\otimes\id_D)
(\psi\otimes\bar{\psi}\otimes\psi)
(\id_V\otimes\beta_{1,n})
(\beta_{1,1}\otimes\id_V^{\otimes n})(u\otimes\mathfrak{v})\\
&=(\prec\otimes\id_D)\tau_2\tau_1
(\psi\otimes\psi\otimes\bar{\psi})
(u\otimes\mathfrak{v})=\tau(\id_D\otimes\prec)(\psi\otimes\psi\otimes\bar{\psi})
(u\otimes\mathfrak{v})=\tau(\bar{\psi}\otimes\bar{\psi})(u\otimes\mathfrak{v}),
\end{align*}
if $m=0,n>0$, while the case of $m>0,n=0$ is similar to check. For $m,n>0$, we have
\begin{align*}
(\bar{\psi}\otimes\bar{\psi})&\beta(\mathfrak{u}\otimes\mathfrak{v})
=(\prec(\psi\otimes\bar{\psi})\otimes\prec(\psi\otimes\bar{\psi}))\beta_{m+1,n+1}(\mathfrak{u}\otimes\mathfrak{v})\\
&=(\prec\otimes\prec)(\psi\otimes\bar{\psi}\otimes\psi\otimes\bar{\psi})(\id_V\otimes\beta_{m+1,n})(\beta_{m+1,1}\otimes\id_V^{\otimes n})(\mathfrak{u}\otimes\mathfrak{v})\\
&=(\prec\otimes\prec)(\psi\otimes\bar{\psi}\otimes\psi\otimes\bar{\psi})(\id_V\otimes\beta_{1,n}\otimes\id_V^{\otimes m})(\id_V^{\otimes 2}\otimes\beta_{m,n})(\beta_{m+1,1}\otimes\id_V^{\otimes n})(\mathfrak{u}\otimes\mathfrak{v})\\
&=(\prec\otimes\prec)\tau_2(\psi\otimes\psi\otimes\bar{\psi}\otimes\bar{\psi})(\id_V^{\otimes 2}\otimes\beta_{m,n})(\beta_{m+1,1}\otimes\id_V^{\otimes n})(\mathfrak{u}\otimes\mathfrak{v})\\
&=(\prec\otimes\prec)\tau_2\tau_3(\psi\otimes\psi\otimes\bar{\psi}\otimes\bar{\psi})(\beta_{m+1,1}\otimes\id_V^{\otimes n})(\mathfrak{u}\otimes\mathfrak{v})\\
&=(\prec\otimes\prec)\tau_2\tau_3(\psi\otimes\psi\otimes\bar{\psi}\otimes\bar{\psi})(\beta_{1,1}\otimes\id_V^{\otimes(m+n)})(\id_V\otimes\beta_{m,1}\otimes\id_V^{\otimes n})(\mathfrak{u}\otimes\mathfrak{v})\\
&=(\prec\otimes\prec)\tau_2\tau_3\tau_1(\psi\otimes\psi\otimes\bar{\psi}\otimes\bar{\psi})
(\id_V\otimes\beta_{m,1}\otimes\id_V^{\otimes n})(\mathfrak{u}\otimes\mathfrak{v})\\
&=(\prec\otimes\id_D)\tau_2(\id_D\otimes\prec\otimes\id_D)\tau_1\tau_2
(\psi\otimes\bar{\psi}\otimes\psi\otimes\bar{\psi})(\mathfrak{u}\otimes\mathfrak{v})\\
&=(\prec\otimes\id_D)\tau_2\tau_1(\prec\otimes\id_D^{\otimes 2})
(\psi\otimes\bar{\psi}\otimes\psi\otimes\bar{\psi})(\mathfrak{u}\otimes\mathfrak{v})\\
&=\tau(\id_D\otimes\prec)(\prec\otimes\id_D^{\otimes 2})
(\psi\otimes\bar{\psi}\otimes\psi\otimes\bar{\psi})(\mathfrak{u}\otimes\mathfrak{v})
=\tau(\bar{\psi}\otimes\bar{\psi})(\mathfrak{u}\otimes\mathfrak{v}).
\end{align*}
Here we have used \eqref{mul} for the first equality, \eqref{beta} for the second and third equalities, the commutativity between $\bar{\psi}$ and the braidings for the fourth to seventh equalities, also \eqref{bda1} for the eighth to tenth ones.

Finally, we check the multiplicity of $\bar{\psi}$ by induction on $m+n$. When $m,n=0$, it is the multiplicity of $\psi$. When exactly one of $m,n$ is 0, it can be obtained by \eqref{uq} and \eqref{mul}.
For $m,n>0$,
 \[\begin{split}
\bar{\psi}(\mathfrak{u})&\prec\bar{\psi}(\mathfrak{v})
=\left(\psi(u_1)\prec\bar{\psi}(\mathfrak{u}')\right)\prec\bar{\psi}(\mathfrak{v})
=\psi(u_1)\prec\left(\bar{\psi}(\mathfrak{u}')\prec\bar{\psi}(\mathfrak{v})
+\bar{\psi}(\mathfrak{u}')\succ\bar{\psi}(\mathfrak{v})\right)\\
&=\psi(u_1)\prec\left(\bar{\psi}(\mathfrak{u}'\prec_\sigma\mathfrak{v})
+\bar{\psi}(\mathfrak{u}'\succ_\sigma\mathfrak{v})\right)
=\psi(u_1)\prec\bar{\psi}(\mathfrak{u}'\shap_\sigma\mathfrak{v})\\
&=\bar{\psi}(u_1\prec_\sigma(\mathfrak{u}'\shap_\sigma\mathfrak{v}))
=\bar{\psi}(u_1\otimes(\mathfrak{u}'\shap_\sigma\mathfrak{v}))=\bar{\psi}(\mathfrak{u}\prec_\sigma\mathfrak{v}),
\end{split}\]
where the first and fifty equalities are due to \eqref{mul}, the third one is obtained by the induction hypothesis. Then
\[\begin{split}
\bar{\psi}(\mathfrak{u})\succ\bar{\psi}(\mathfrak{v})
&=\bar{\psi}(\mathfrak{u})\prec^\tau\bar{\psi}(\mathfrak{v})
=\prec\tau(\bar{\psi}\otimes\bar{\psi})(\mathfrak{u}\otimes\mathfrak{v})
=\prec(\bar{\psi}\otimes\bar{\psi})\beta(\mathfrak{u}\otimes\mathfrak{v})\\
&=\bar{\psi}\prec_\sigma\beta(\mathfrak{u}\otimes\mathfrak{v})=\bar{\psi}(\mathfrak{u}\prec_\sigma^\beta\mathfrak{v})=\bar{\psi}(\mathfrak{u}\succ_\sigma\mathfrak{v}),
\end{split}\]
where we use the commutativity of $D$ for the first equality, the commutativity between $\bar{\psi}$ and the braidings for the third one, and \eqref{uq} for the last one.
\end{proof}

\begin{remark}
Let $\svs$ be the category of braided vector spaces with symmetric braidings and $F:\bcd\rightarrow\svs$ be the forgetful functor ignoring the dendriform algebra structures. Then functor $T^+_\sigma:\svs\rightarrow\bcd$ is just the left adjoint to $F$ by Theorem \ref{fbcd}, that is,
\[\mathrm{Hom}_\svs(V,F(D))\cong\mathrm{Hom}_\bcd(T^+_\sigma(V),D).\]
\end{remark}

\section{The braided dendriform Hopf algebra of quantum shuffle}

Ronco introduced the dendriform Hopf algebra structure in~\cite{Ron1}. Here we modify it (by an opposite of $\tilde{\Delta}$) to give
\begin{defn}
A quadruple $(H,\prec,\succ,\tilde{\Delta})$ is called a \textit{dendriform Hopf algebra} if
$(H,\prec,\succ)$ is a dendriform algebra equipped with a coassociative $\bk$-linear map $\tilde{\Delta}:H\rightarrow H\otimes H$ such that
\begin{align*}
&\tilde{\Delta}(a\prec b)=\sum a'\prec b'\otimes a''\star b''+a'\prec b\otimes a''+a'\otimes a''\star b+a\prec b'\otimes b''+a\otimes b,\\
&\tilde{\Delta}(a\succ b)=\sum a'\succ b'\otimes a''\star b''+a'\succ b\otimes a''+ b'\otimes a\star b''+a\succ b'\otimes b''+b\otimes a
\end{align*}
for any $a,b\in H$, where we have used the notations $a\star b=a\prec b+a\succ b$ and $\tilde{\Delta}(a)=\sum a'\otimes a''$.
\end{defn}
In fact, for a dendriform Hopf algebra $(H,\prec,\succ,\tilde{\Delta})$, its augmented space $\bar{H}=H\oplus\bk$ becomes a bialgebra $(\bar{H},\mu,\Delta,\varepsilon)$ defined by
\begin{align*}
&1_\bk x=x1_\bk =x,\,xy=x\star y,\\
&\Delta(1_\bk)=1_\bk\otimes1_\bk,\,\Delta(x)=x\otimes 1_\bk+1_\bk\otimes x+\tilde{\Delta}(x),\\
&\varepsilon(1_\bk)=1_\bk,\,\varepsilon(x)=0,
\end{align*}
for any $x,y\in H$. In particular, if $H$ has a graded space structure compatible with $\tilde{\Delta}$, then $\bar{H}$ is a graded connected Hopf algebra.

It is natural to expect that there exists a braided version of dendriform Hopf algebras combining dendriform Hopf algebras with braided algebras.

\begin{defn}
A quintuple $(H,\prec,\succ,\tilde{\Delta},\sigma)$ is called a \textit{braided dendriform Hopf algebra} if
$(H,\prec,\succ,\sigma)$ is a braided dendriform algebra equipped with a coassociative coproduct $\tilde{\Delta}:H\rightarrow H\otimes H$ such that
\begin{align}
&\label{dha1}\tilde{\Delta}\prec=(\prec\otimes\star)\sigma_2(\tilde{\Delta}\otimes\tilde{\Delta})
+(\mbox{id}_H\otimes\star+(\prec\otimes\mbox{id}_H)\sigma_2)(\tilde{\Delta}\otimes\mbox{id}_H)+(\prec\otimes\mbox{id}_H)(\mbox{id}_H\otimes\tilde{\Delta})+\mbox{id}_H^{\otimes 2},\\
&\label{dha2}\tilde{\Delta}\succ=(\succ\otimes\star)\sigma_2(\tilde{\Delta}\otimes\tilde{\Delta})
+(\succ\otimes\mbox{id}_H)\sigma_2(\tilde{\Delta}\otimes\mbox{id}_H)+(\succ\otimes\mbox{id}_H+(\mbox{id}_H\otimes\star)\sigma_1)(\mbox{id}_H\otimes\tilde{\Delta})+\sigma.
\end{align}
\end{defn}
Consequently, the augmented space $\bar{H}=H\oplus\bk$ of a braided dendriform Hopf algebra $(H,\prec,\succ,\tilde{\Delta},\sigma)$ becomes a braided bialgebra $(\bar{H},\mu,\Delta,\varepsilon)$ with $\mu:=\,\prec +\succ$.

Define the deconcatenation map on $T_\sigma(V)$,
\[\Delta:T_\sigma(V)\rightarrow T_\sigma(V)^{\underline{\otimes} 2},
v_1\otimes\cdots\otimes v_n\mapsto\sum_{i=0}^n(v_1\otimes\cdots\otimes v_i)\,\underline{\otimes}\,(v_{i+1}\otimes\cdots\otimes v_n),\,n\geq0,\]
and its reduced one on $T^+_\sigma(V)$,
\[\bar{\Delta}:T^+_\sigma(V)\rightarrow T^+_\sigma(V)^{\underline{\otimes} 2},
v_1\otimes\cdots\otimes v_n\mapsto\sum_{i=1}^{n-1}(v_1\otimes\cdots\otimes v_i)\,\underline{\otimes}\,(v_{i+1}\otimes\cdots\otimes v_n),\,n>0,\]
with $\bar{\Delta}(v)=0$ for all $v\in V$. Also, $T_\sigma(V)$ has the counit map $\varepsilon$ such that
\[\varepsilon(1_\bk)=1_\bk\mbox{ and }\varepsilon(v_1\otimes\cdots\otimes v_n)=0,\,n>0.\]

By \cite[Proposition~1.3]{Ron2}, the free commutative dendriform algebra $(T^+(V),\prec_V,\succ_V)$ equipped with the reduced deconcatenation map $\bar{\Delta}$ is a graded dendriform Hopf algebra. In particular, the shuffle algebra $(T(V),\shap,\Delta)$ is a graded connected Hopf algebra. Now we give the braided analogue of this result.

\begin{theorem} \label{bdha}
For a braided vector space $(V,\sigma)$, the sextuple $(T_\sigma^+(V),\prec_\sigma,\succ_\sigma,\bar{\Delta},\varepsilon,\beta)$ is a braided dendriform Hopf algebra.
\end{theorem}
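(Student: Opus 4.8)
The plan is to clear the easy parts first and then verify the two coherence identities by induction. By Theorem~\ref{bcd}, $(T^+_\sigma(V),\prec_\sigma,\succ_\sigma,\beta)$ is already a braided dendriform algebra, so that ingredient is in place. The reduced deconcatenation $\bar\Delta$ is coassociative because the full deconcatenation $\Delta$ on $T_\sigma(V)$ is coassociative (splitting a word into three consecutive blocks is an associative operation) and $\bar\Delta$ inherits coassociativity from $\Delta$; the counit $\varepsilon$ is the trivial one, vanishing on $T^+_\sigma(V)$ and compatible with the grading by word length. So the real content is the pair of identities \eqref{dha1}, \eqref{dha2}, read with $\prec=\prec_\sigma$, $\succ=\succ_\sigma$, $\tilde\Delta=\bar\Delta$, $\star=\shap_\sigma$, $\sigma=\beta$; specializing $\sigma=\tau$ these are Ronco's identities~\cite{Ron2}, so we are establishing their braided refinement. (As a sanity check, summing \eqref{dha1} and \eqref{dha2} should reproduce the reduced form of the braided bialgebra identity for Rosso's braided bialgebra $(T_\sigma(V),\shap_\sigma,\Delta,\varepsilon,\beta)$~\cite{Ros}; indeed Theorem~\ref{bdha} refines that identity by splitting $\shap_\sigma=\prec_\sigma+\succ_\sigma$.)

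I would prove \eqref{dha1} and \eqref{dha2} simultaneously by induction on $m+n$, where $\mathfrak{u}\in V^{\otimes m}$, $\mathfrak{v}\in V^{\otimes n}$, $m,n\geq1$ (recall $\prec_\sigma$ and $\succ_\sigma$ are defined only on $T^+_\sigma(V)$). The tools are: the splitting recursions \eqref{bd1}, which give $\mathfrak{u}\prec_\sigma\mathfrak{v}=u_1\otimes(\mathfrak{u}'\shap_\sigma\mathfrak{v})$ for $\mathfrak{u}=u_1\otimes\mathfrak{u}'$ and $\mathfrak{u}\succ_\sigma\mathfrak{v}=(\id_V\otimes\shap_{\sigma(m,n-1)})(\beta_{m,1}\otimes\id_V^{\otimes(n-1)})(\mathfrak{u}\otimes\mathfrak{v})$; the elementary fact that $\bar\Delta$ of a word $v\otimes\mathfrak{w}$ of length $\geq2$ equals $v\,\underline{\otimes}\,\mathfrak{w}$ plus the result of prepending $v$ to the left leg of $\bar\Delta(\mathfrak{w})$; the relation $\shap_\sigma=\prec_\sigma+\succ_\sigma$, so that the induction hypothesis applied to strictly shorter pairs delivers $\bar\Delta$ of the quantum-shuffle products that arise; and, throughout, the functoriality relations \eqref{beta} for the braids $\beta_{ij}=T^\sigma_{\chi_{ij}}$ together with the braided-dendriform compatibilities \eqref{bda1}, \eqref{bda2}. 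The base cases, where $m$ or $n$ equals $1$, are short direct computations, much simplified because $\bar\Delta$ vanishes on $V$ and hence kills most of the summands of \eqref{dha1}, \eqref{dha2}.

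In the inductive step for \eqref{dha1} I would begin with $\bar\Delta(\mathfrak{u}\prec_\sigma\mathfrak{v})=\bar\Delta(u_1\otimes(\mathfrak{u}'\shap_\sigma\mathfrak{v}))$, peel off $u_1$ by the $\bar\Delta$-recursion, insert the inductive expression $\bar\Delta(\mathfrak{u}'\shap_\sigma\mathfrak{v})=\bar\Delta(\mathfrak{u}'\prec_\sigma\mathfrak{v})+\bar\Delta(\mathfrak{u}'\succ_\sigma\mathfrak{v})$, and then reassemble: prepending $u_1$ to a left leg of the form $\mathfrak{p}\shap_\sigma\mathfrak{q}$ turns it into $(u_1\otimes\mathfrak{p})\prec_\sigma\mathfrak{q}$ by \eqref{bd1}; the leftover term $u_1\,\underline{\otimes}\,(\mathfrak{u}'\shap_\sigma\mathfrak{v})$ supplies the missing contribution so that $\bar\Delta(\mathfrak{u})$, rather than $\bar\Delta(\mathfrak{u}')$, reappears; and the braids are moved into the shapes $\sigma_2(\bar\Delta\otimes\bar\Delta)$, $\sigma_2(\bar\Delta\otimes\id)$, etc.\ prescribed by \eqref{dha1} using \eqref{beta} and are pushed past $\prec_\sigma$, $\succ_\sigma$, $\shap_\sigma$ using \eqref{bda1}. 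Identity \eqref{dha2} is handled in the same induction, now peeling off the letter that $\beta_{m,1}$ has moved to the front of $\mathfrak{u}\succ_\sigma\mathfrak{v}$ and using \eqref{bda2}; this extra $\beta_{m,1}$ is exactly what produces the lone $\sigma$-summand and the $\sigma_1$-twisted term in \eqref{dha2}. Alternatively, once \eqref{dha1} is in hand, \eqref{dha2} follows by subtracting it from Rosso's reduced bialgebra identity $\bar\Delta(\mathfrak{u}\shap_\sigma\mathfrak{v})=\bar\Delta(\mathfrak{u}\prec_\sigma\mathfrak{v})+\bar\Delta(\mathfrak{u}\succ_\sigma\mathfrak{v})$ and matching the remainder termwise with \eqref{dha2}.

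I expect the only genuine difficulty to be the braid bookkeeping: the braids $\beta_{ij}$ threaded through the deconcatenation must be relocated many times — past the prepend-by-$u_1$ map and past $\prec_\sigma$, $\succ_\sigma$, $\shap_\sigma$ — so \eqref{beta}, \eqref{bda1}, \eqref{bda2} must be applied carefully and in the right order to land exactly on the shapes appearing in \eqref{dha1}, \eqref{dha2}. One also has to keep track of the degenerate deconcatenation terms, in which a formal expression $\mathfrak{p}\prec_\sigma1$ or $1\succ_\sigma\mathfrak{q}$ must be read as $\mathfrak{p}$, resp.\ $\mathfrak{q}$, and of the edge cases $m=1$ or $n=1$ where $\bar\Delta$ vanishes and several summands disappear. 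Finally, $V$ here is an arbitrary braided space, with no symmetry assumption $\sigma^2=\id_V^{\otimes2}$ (unlike in Theorems~\ref{bcd}--\ref{fbcd}), so $T^+_\sigma(V)$ need not be a commutative braided dendriform algebra and \eqref{dha2} cannot be inferred from \eqref{dha1} by a commutativity shortcut; both identities require a genuine argument.
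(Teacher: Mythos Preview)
Your proposal is correct and would yield a complete proof, but it takes a genuinely different route from the paper. The paper does not argue by induction on word length; instead it exploits the \emph{closed-form} description of $\shap_\sigma$ as a sum $\sum_{w\in\mathfrak{S}_{m,n}}T^\sigma_w$ over shuffle permutations. The splitting $\shap_\sigma=\prec_\sigma+\succ_\sigma$ then corresponds to the partition $\mathfrak{S}_{k,n-k}=\mathfrak{S}^1_{k,n-k}\cup\mathfrak{S}^2_{k,n-k}$ according to whether $w(1)=1$ or $w(k+1)=1$. The braided bialgebra identity $\Delta\shap_\sigma=(\shap_\sigma\otimes\shap_\sigma)\beta_2(\Delta\otimes\Delta)$ reflects a decomposition $\mathfrak{S}_{k,n-k}=\bigcup(\mathfrak{S}_{ij}\times\mathfrak{S}_{k-i,n-k-j})\tau^{ij}_{kn}$, and the paper observes that this decomposition \emph{refines} compatibly with the partition into $\mathfrak{S}^1$ and $\mathfrak{S}^2$, yielding \eqref{dha1} and \eqref{dha2} directly as the two halves.

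What each approach buys: the paper's argument is short and conceptual once the shuffle-set decompositions are in hand, and it makes transparent why \eqref{dha1}, \eqref{dha2} are exactly the two halves of Rosso's bialgebra identity --- no separate induction is needed and the braid bookkeeping is absorbed into the well-definedness of $T^\sigma_w$. Your inductive argument is more elementary in that it uses only the recursive definitions \eqref{bd1} and never unpacks the global $T^\sigma_w$ formula, at the cost of the term-by-term braid manipulations you anticipate. Your alternative shortcut for \eqref{dha2} (subtract \eqref{dha1} from the reduced bialgebra identity) is valid and in spirit close to the paper's viewpoint.
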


\begin{proof}
First by Theorem \ref{bcd}, the quadruple $(T_\sigma^+(V),\prec_\sigma,\succ_\sigma,\beta)$ is a braided dendriform algebra. We only need to check that the compatibility conditions \eqref{dha1} and \eqref{dha2} hold. It is a well-known fact that the quantum shuffle algebra $(T_\sigma(V),\shap_\sigma,\Delta,\varepsilon,\beta)$ is a braided (also graded connected) Hopf algebra \cite[\S 3]{JR}, \cite{Ros}. In particular,
\[\Delta\shap_\sigma=(\shap_\sigma\otimes\shap_\sigma)\beta_2\Delta,\]
which is due to the following decomposition of shuffle set $\mathfrak{S}_{k,n-k}$ for any fixed $0\leq k,l\leq n$,
\[\mathfrak{S}_{k,n-k}=\bigcup_{{0\leq i\leq k \atop 0\leq j\leq n-k} \atop i+j=l}(\mathfrak{S}_{ij}\times\mathfrak{S}_{k-i,n-k-j})\tau_{kn}^{ij},\]
where $\tau_{kn}^{ij}\in\mathfrak{S}_n$ is defined by
\[\tau_{kn}^{ij}(p)=
\begin{cases}
p,&\mbox{if }1\leq p\leq i\mbox{ or }k+j+1\leq p\leq n,\\
p+j,&\mbox{if }i+1\leq p\leq k,\\
p-k+i,&\mbox{if }k+1\leq p\leq k+j.\\
\end{cases}\]

On the other hand, we have the disjoint union
\[\mathfrak{S}_{k,n-k}=\mathfrak{S}^1_{k,n-k}\cup\mathfrak{S}^2_{k,n-k},\]
where $\mathfrak{S}^1_{k,n-k}=\{w\in\mathfrak{S}_{k,n-k}\,|\,w(1)=1\}$
and $\mathfrak{S}^2_{k,n-k}=\{w\in\mathfrak{S}_{k,n-k}\,|\,w(k+1)=1\}$.
Furthermore, the subsets $\mathfrak{S}^1_{k,n-k}$ and $\mathfrak{S}^2_{k,n-k}$ of $\mathfrak{S}_{k,n-k}$ also have decompositions
\[\mathfrak{S}^1_{k,n-k}=\bigcup_{{0<i\leq k \atop 0\leq j\leq n-k} \atop i+j=l}(\mathfrak{S}^1_{ij}\times\mathfrak{S}_{k-i,n-k-j})\tau_{kn}^{ij},\,
\mathfrak{S}^2_{k,n-k}=\bigcup_{{0\leq i\leq k \atop 0<j\leq n-k} \atop i+j=l}(\mathfrak{S}^2_{ij}\times\mathfrak{S}_{k-i,n-k-j})\tau_{kn}^{ij},
\]
for any fixed $0\leq k,l\leq n$. These two decomposition formulas correspond to conditions \eqref{dha1} and \eqref{dha2} for $T^+_\sigma(V)$ respectively.
\end{proof}

\section{Universal enveloping algebras of braided commutative dendriform algebras}
The functor found by Aguiar \cite{Agu} from the category of Rota-Baxter algebras to dendriform algebras has its left adjoint functor giving universal enveloping Rota-Baxter algebras of dendriform algebras. The universal enveloping Rota-Baxter algebra of a dendriform algebra was first studied in \cite{EG}. We first recall its definition.

\begin{defn}\label{uerb}
Fix $\lambda\in\bk$. For a dendriform algebra $(D,\prec,\succ)$, its \textit{universal enveloping Rota-Baxter algebra} of weight $\lambda$ is a Rota-Baxter algebra $U_{RB}(D):=U_{RB,\lambda}(D)$ with a dendriform algebra homomorphism $\rho:D\rightarrow U_{RB}(D)$ such that, for any Rota-Baxter algebra $R$ of weight $\lambda$ and dendriform algebra homomorphism $f:D\rightarrow R$, there exists a unique Rota-Baxter algebra homomorphism $\bar{f}:U_{RB}(D)\rightarrow R$ such that $f=\bar{f} \rho$.
\end{defn}

Considering all objects in Definition \ref{uerb} to the braided framework, it gives the notion of the \textit{braided universal enveloping Rota-Baxter algebra} of a braided dendriform algebra. Indeed by Proposition~\ref{sbrd}, there is a functor from the category of strongly braided Rota-Baxter algebras to braided dendriform algebras. Naturally we expect that its left adjoint functor gives universal objects associated to braided dendriform algebras in the category of strongly braided Rota-Baxter algebras.

In order to construct $U_{RB}(D)$ for a braided commutative dendriform algebra $D$, we first recall the \textit{quantum symmetric algebra}
\[S(V):=T_\sigma(V)/(\ker(\sigma+\mbox{id}_V^{\otimes 2}))\]
on a braided vector space $(V,\sigma)$. Also let $S^+(V):=T^+_\sigma(V)/(\ker(\sigma+\mbox{id}_V^{\otimes 2}))$.
Obviously the braiding $\beta$ on $T_\sigma(V)$ induces a braiding on $S(V)$, also denoted by $\beta$. Equipped with the usual concatenation product $\cdot$, $(S(V),\cdot,\beta)$ becomes a braided unital algebra by \eqref{beta}, while $(S^+(V),\cdot,\beta)$ is nonunital. If $\sigma$ is symmetric, then $(S(V),\cdot,\beta)$ is also (braided) commutative.

Furthermore, it is easy to check that the quantum symmetric algebra has the following universal property; see also \cite[Theorem 4.2]{CTS} for \textit{coboundary category} settings. For any braided commutative unital algebra $(A,\mu,\tau)$ with a braided vector space homomorphism  $f:(V,\sigma)\rightarrow(A,\tau)$, there exists a unique homomorphism $\tilde{f}:(S(V),\cdot,\beta)\rightarrow(A,\mu,\tau)$ of braided algebras such that $f=\tilde{f} i_V$, where $i_V:V\rightarrow S(V)$ is the natural inclusion. Hence, for the category of braided commutative algebras (not necessarily unital), the free object over $V$ becomes $(S^+(V),\cdot,\beta)$ if $\sigma$ is symmetric.

Now we give the following braided commutative analogue of \cite[Theorem 3.5]{EG}.
\begin{theorem}\label{berba}
For a braided commutative dendriform algebra $(D,\prec_D,\succ_D,\sigma)$ with $\sigma^2=\id_D^{\otimes 2}$, the braided universal enveloping commutative Rota-Baxter algebra $U_{RB}(D)$ is given by \[(\sha_\sigma^0(S^+(D))/J_{RB},\diamond_{\sigma,0},P_D,\beta),\]
where $J_{RB}$ is the Rota-Baxter ideal of $\sha_\sigma^0(S^+(D))$ generated by
$x\prec_D y-x\diamond_{\sigma,0}P_D(y)$
for $x,y\in D$. Here the Rota-Baxter operator of $\sha_\sigma^0(S^+(D))$, and its induced one on $\sha_\sigma^0(S^+(D))/J_{RB}$ are written as $P_D$ for short.
\end{theorem}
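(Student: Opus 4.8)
The plan is to realize $U_{RB}(D)$ as a quotient of the free strongly braided commutative Rota-Baxter algebra of weight $0$ built over the free braided commutative algebra on the underlying braided vector space of $D$, and then to check that the relations cutting out $J_{RB}$ force exactly the dendriform structure back. Concretely, set $A:=S^+(D)$. By the universal property of the quantum symmetric algebra recalled before the theorem, $A$ is the free braided commutative algebra on $(D,\sigma)$ with inclusion $i_D\colon D\to A$, and since $\sigma^2=\id_D^{\otimes2}$ the braiding $\beta$ on $A$ is symmetric and $(A,\cdot,\beta)$ is braided commutative; hence Proposition~\ref{fbcr1} applies and $E:=(\sha_\sigma^0(A),\diamond_{\sigma,0},P_D,\beta)$ is the free object of $\scr^0_0$ over $A$, with structure map $j\colon A\to\sha_\sigma^0(A)$. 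In particular $E$ is strongly braided commutative of weight $0$, and it is generated as a Rota-Baxter algebra by $j(A)$, hence by $j(i_D(D))$ since $A$ is algebra-generated by $i_D(D)$. Put $\rho:=q\circ j\circ i_D$, where $q\colon E\to E/J_{RB}$ is the quotient. I would then prove, in order: (1) $J_{RB}$ is $\beta$-stable, so $(E/J_{RB},\diamond_{\sigma,0},P_D,\beta)\in\scr^0_0$; (2) $\rho$ is a homomorphism of braided commutative dendriform algebras into $(E/J_{RB},\prec_{P_D},\succ_{P_D},\beta)$ as in Proposition~\ref{sbrd}; (3) $(E/J_{RB},\rho)$ has the universal property of Definition~\ref{uerb} in the braided framework.

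Step (1) is where I expect the main difficulty. It rests on a general lemma: in a strongly braided Rota-Baxter algebra, the Rota-Baxter ideal $\langle W\rangle$ generated by a subspace $W$ with $\beta(W\,\underline{\otimes}\,E)\subseteq E\,\underline{\otimes}\,\langle W\rangle$ and $\beta(E\,\underline{\otimes}\,W)\subseteq\langle W\rangle\,\underline{\otimes}\,E$ is itself $\beta$-stable; this is proved by induction on the generation of the ideal, using that $\beta$ is compatible with $\diamond_{\sigma,0}$ via \eqref{ba1} and with $P_D$ via \eqref{wbr}, together with $\beta^2=\id^{\otimes2}$. It then suffices to verify the two inclusions for $W=\mathrm{span}\{g_{x,y}:=x\prec_D y-x\diamond_{\sigma,0}P_D(y)\mid x,y\in D\}$, and by the same compatibility it is enough to test against $j(i_D(z))$ for $z\in D$. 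Since $\beta$ restricts to $\sigma$ on $D\otimes D$, axiom \eqref{bda1} gives $\beta\big((x\prec_D y)\,\underline{\otimes}\,z\big)=(\id_D\otimes\prec_D)\sigma_1\sigma_2(x\otimes y\otimes z)$, while \eqref{ba1} and \eqref{wbr} give $\beta\big((x\diamond_{\sigma,0}P_D(y))\,\underline{\otimes}\,z\big)=(\id_E\otimes\diamond_{\sigma,0})(\id_E^{\otimes2}\otimes P_D)\sigma_1\sigma_2(x\otimes y\otimes z)$; writing $\sigma_1\sigma_2(x\otimes y\otimes z)=\sum z_{(1)}\otimes x_{(1)}\otimes y_{(1)}$ and subtracting yields $\beta(g_{x,y}\,\underline{\otimes}\,z)=\sum z_{(1)}\,\underline{\otimes}\,g_{x_{(1)},y_{(1)}}\in E\,\underline{\otimes}\,J_{RB}$. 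The reflected inclusion $\beta(E\,\underline{\otimes}\,W)\subseteq J_{RB}\,\underline{\otimes}\,E$ is symmetric, using the second identities in \eqref{bda1} and \eqref{wbr}.

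For step (2), $\rho$ intertwines braidings since it is a composite of $i_D$ (intertwining $\sigma$ and $\beta|_A$), the braided-algebra map $j$, and $q$ (braided by step (1)). The identity $\rho(x\prec_D y)=\rho(x)\prec_{P_D}\rho(y)$ holds because $j(i_D(x\prec_D y))-j(i_D(x))\diamond_{\sigma,0}P_D(j(i_D(y)))\in J_{RB}$, so applying $q$ and $\prec_{P_D}=\diamond_{\sigma,0}(\id\otimes P_D)$ gives it. The identity for $\succ$ is \emph{not} imposed directly but follows from braided commutativity of $D$ (so $\succ_D=\prec_D\sigma$) together with \eqref{wbr} and $\mu\beta=\mu$ in $E/J_{RB}$: writing $\sigma(x\otimes y)=\sum y_{(1)}\otimes x_{(1)}$ one computes $\rho(x\succ_D y)=\sum\rho(y_{(1)})\prec_{P_D}\rho(x_{(1)})=\mu(\id\otimes P_D)\beta(\rho(x)\otimes\rho(y))=\mu(P_D\otimes\id)(\rho(x)\otimes\rho(y))=\rho(x)\succ_{P_D}\rho(y)$. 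This is precisely why relations on $\prec_D$ alone suffice.

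For step (3), given $(R,P_R,\tau)\in\scr^0_0$ and a homomorphism $f\colon D\to R$ of braided commutative dendriform algebras (with $R$ carrying $\prec_{P_R},\succ_{P_R}$ from Proposition~\ref{sbrd}), view $f$ as a morphism of braided vector spaces and extend it, by the universal property of $A=S^+(D)$, to a braided algebra map $\tilde f\colon A\to R$ with $\tilde f i_D=f$, then by Proposition~\ref{fbcr1} to a morphism $\hat f\colon E\to R$ in $\scr^0_0$ with $\hat f j=\tilde f$. Since $\ker\hat f$ is a Rota-Baxter ideal and, for $x,y\in D$, both $\hat f(j(i_D(x\prec_D y)))$ and $\hat f(j(i_D(x))\diamond_{\sigma,0}P_D(j(i_D(y))))$ equal $\mu_R(f(x)\otimes P_R(f(y)))$ (the first because $f$ is dendriform, the second because $\hat f$ respects product and $P$), we get $J_{RB}\subseteq\ker\hat f$, hence $\hat f=\bar f q$ for a unique morphism $\bar f\colon E/J_{RB}\to R$ in $\scr^0_0$; moreover $\bar f\rho=\hat f j i_D=\tilde f i_D=f$. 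Uniqueness of $\bar f$ with $\bar f\rho=f$ is immediate because $E/J_{RB}$ is Rota-Baxter-generated by $\rho(D)$, as noted above. Thus $(E/J_{RB},\diamond_{\sigma,0},P_D,\beta)$ with $\rho$ is $U_{RB}(D)$. Apart from step (1), all of this parallels the non-braided case in \cite{EG}, so the $\beta$-stability of $J_{RB}$ is the only genuinely new obstacle.
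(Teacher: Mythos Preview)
Your proposal is correct and follows essentially the same architecture as the paper's proof: build $E=\sha_\sigma^0(S^+(D))$ via Proposition~\ref{fbcr1}, check $\beta$-stability of $J_{RB}$ on generators using \eqref{bda1} together with \eqref{ba1} and \eqref{wbr}, observe that the $\succ$-relations fall into $J_{RB}$ by braided commutativity, and then run the universal property through the chain $D\to S^+(D)\to E\to E/J_{RB}$. The only noteworthy difference is organizational: the paper verifies $\beta(g_{x,y}\,\underline{\otimes}\,a)\in E\,\underline{\otimes}\,W$ directly for arbitrary $a\in E$ (using that $\beta$ carries $D\,\underline{\otimes}\,E$ into $E\,\underline{\otimes}\,D$, so \eqref{bda1} applies with $\beta$ in place of $\sigma$), whereas you reduce to $a=z\in D$ and induct along the Rota-Baxter generation of $E$; for that induction to close you must track that the output lands in $E\,\underline{\otimes}\,W$ rather than merely $E\,\underline{\otimes}\,J_{RB}$, since otherwise the product step becomes circular---your base-case computation already gives this, so the argument goes through. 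For uniqueness the paper invokes the uniqueness of $\hat f$ and surjectivity of the quotient map, which is equivalent to your observation that $E/J_{RB}$ is Rota-Baxter generated by $\rho(D)$.
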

\begin{proof}
By Proposition \ref{fbcr1}, we know that the quadruple
\[(\sha_\sigma^0(S^+(D)),\diamond_{\sigma,0},P_D,\beta)\]
is the free braided commutative Rota-Baxter algebra of weight 0 over $S^+(D)$ in $\scr^0_\lambda$.

Since $(\sha_\sigma^0(S^+(D)),\diamond_{\sigma,0},P_D,\beta)$ is a braided commutative Rota-Baxter subalgebra of $\sha_\sigma(S(D))$,  we also know that $(\sha_\sigma^0(S^+(D)),\prec_{P_D},\succ_{P_D},\beta)$ is a braided commutative dendriform subalgebra of it by Corollary \ref{bshd}, with
\[a\prec_{P_D}b=a\diamond_{\sigma,0}P_D(b),\,a\succ_{P_D}b=P_D(a)\diamond_{\sigma,0}b,
\,a,b\in\sha_\sigma^0(S^+(D)).\]

Next we show that the braiding $\beta$ stabilizes the tensor subspace \[J_{RB}\otimes\sha_\sigma^0(S^+(D))+\sha_\sigma^0(S^+(D))\otimes J_{RB},\]
thus inducing a braiding on $\sha_\sigma^0(S^+(D))/J_{RB}$. By condition \eqref{ba1}, we only need
to prove that the images under $\beta$ of the elements
\[(x\prec_D y-x\prec_{P_D}y)\otimes a\mbox{ and }a\otimes (x\prec_D y-x\prec_{P_D}y),\]
for all $x,y\in D$ and $a\in\sha_\sigma^0(S^+(D))$ still lie in this subspace. Indeed, using conditions \eqref{bda1} and \eqref{bda2}, we have
\begin{align*}
&\begin{split}
\beta((x&\prec_D y-x\prec_{P_D}y)\otimes a)=\beta\left((\prec_D-\prec_{P_D})\otimes\mbox{id}_{\sha_\sigma^0(S^+(D))}\right)(x\otimes y\otimes a)\\&=\left(\mbox{id}_{\sha_\sigma^0(S^+(D))}\otimes(\prec_D-\prec_{P_D})\right)\beta_1\beta_2(x\otimes y\otimes a),
\end{split}\\
&\begin{split}
\beta(a\otimes&(x\prec_D y-x\prec_{P_D}y))=\beta\left(\mbox{id}_{\sha_\sigma^0(S^+(D))}\otimes(\prec_D-\prec_{P_D})\right)
(a\otimes x\otimes y)\\&=\left((\prec_D-\prec_{P_D})\otimes\mbox{id}_{\sha_\sigma^0(S^+(D))}\right)\beta_2\beta_1(a\otimes x\otimes y).
\end{split}
\end{align*}
Together with condition \eqref{wbr}, it indicates that   $J_{RB}\otimes\sha_\sigma^0(S^+(D))$ and $\sha_\sigma^0(S^+(D))\otimes J_{RB}$ are interchanged with each other under $\beta$. As a result, we have proved that $(\sha_\sigma^0(S^+(D))/J_{RB},\diamond_{\sigma,0},P_D,\beta)$ is a braided commutative Rota-Baxter (thus dendriform) quotient algebra of $\sha_\sigma^0(S^+(D))$.

Furthermore, elements
\[x\succ_D y-x\succ_{P_D}y,\,x,y\in D,\]
also lie in $J_{RB}$ by the compatibility of the dendriform algebra structures on $D$ and $\sha_\sigma^0(S^+(D))$. Consequently, we can abuse the notation and abbreviate $\prec_{P_D}$ as $\prec_D$ and $\succ_{P_D}$ as $\succ_D$ in $\sha_\sigma^0(S^+(D))/J_{RB}$.

It remains to prove the universal property. Suppose that $(R,\mu,P,\tau)$ is a braided commutative Rota-Baxter algebra such that $f:D\rightarrow R$ is a homomorphism of dendriform algebras. First by the universal property of $S^+(D)$, there exists a unique homomorphism $\tilde{f}:S^+(D)\rightarrow R$ of braided algebras such that $f=\tilde{f} i_D$.
Then by the universal property of $\sha_\sigma^0(S^+(D))$, there exists a unique homomorphism $\hat{f}:\sha_\sigma^0(S^+(D))\rightarrow R$ of braided Rota-Baxter algebras such that $\tilde{f}=\hat{f} j_{S^+(D)}$. Let $j_D=j_{S^+(D)} i_D$. Then we have $\hat{f}$ as the unique homomorphism such that $f=\hat{f} j_D$. However, $j_D$ is not a homomorphism of dendriform algebras in general.

In the end, we show that the homomorphism $\hat{f}$ can be factored through $\sha_\sigma^0(S^+(D))/J_{RB}$.
First we check
\[\begin{split}
\hat{f}(x&\prec_D y-x\diamond_{\sigma,0}P_D(y))=f(x\prec_D y)-\hat{f}(x\diamond_{\sigma,0}P_D(y))
=f(x)\prec_P f(y)-\hat{f}(x)\hat{f}(P_D(y))\\
&=f(x)\prec_P f(y)-f(x)P(f(y))=0,
\end{split}\]
for all $x,y\in D$, thus $J_{RB}\subseteq\ker\hat{f}$. It implies the existence of a homomorphism $\bar{f}:\sha_\sigma^0(S^+(D))/J_{RB}\rightarrow R$ of braided Rota-Baxter algebras such that $\hat{f}=\bar{f} \pi$ with $\pi:\sha_\sigma^0(S^+(D))\rightarrow\sha_\sigma^0(S^+(D))/J_{RB}$ as the natural projection.

Letting $\rho_D=\pi\circ j_D$, then we clearly have $\rho_D$ as a homomorphism of dendriform algebras and $\bar{f}$ as a homomorphism of braided Rota-Baxter algebras such that $f=\bar{f} \rho_D$.
To verify the uniqueness of such a homomorphism $\bar{f}$, assume that there exists another homomorphism $\bar{f}'$ satisfying $f=\bar{f}' \rho_D$, that is, $\bar{f}'\pi j_D=f=\bar{f}\pi j_D$. By the uniqueness of $\hat{f}$ with $f=\hat{f} j_D$, we have $\bar{f}'\pi=\hat{f}=\bar{f}\pi$, and thus $\bar{f}'=\bar{f}$ by the surjectivity of $\pi$.

In summary, we obtain the following commutative diagram
\[\xymatrix@=2em{&D\ar@{->}[dl]_-{i_D}\ar@{->}[ddl]_>>>>>>>{f}\ar@{->}[ddr]_>>>>>>>{\rho_D}\ar@{->}[dr]^-{j_D}&\\
	 S^+(D)\ar@{->}[rr]^-{j_{S^+(D)}}\ar@{.>}[d]_{\exists!\tilde{f}}&&
	 \sha_\sigma^0(S^+(D))\ar@{.>}[dll]_-{\exists!\hat{f}}\ar@{->}[d]_-{\pi}\\
	 R&&\sha_\sigma^0(S^+(D))/J_{RB}\,,\ar@{.>}[ll]_<<<<<<<{\exists!\bar{f}}}\]
and $(\sha_\sigma^0(S^+(D))/J_{RB},\diamond_{\sigma,0},P_D,\beta)$ is the braided universal enveloping commutative Rota-Baxter algebra of $D$.
\end{proof}

\smallskip

\noindent
{\bf Acknowledgments.}
Y. Li thanks Rutgers University -- Newark for its hospitality during his visit in 2018-2019. This work is supported by Natural Science Foundation of China (Grant Nos. 11501214, 11771142,  11771190) and the China Scholarship Council (No. 201808440068).

\bibliographystyle{amsplain}

\end{document}